\theoremstyle{plain}
\author{Nina Zubrilina}
\affil{Stanford University Department of Mathematics\\ Email: nizubrilina@gmail.com}
\date{\today}
\begin{document}

\newtheorem{theorem}{Theorem}[section]
\newtheorem{observation}[theorem]{Observation}
\newtheorem{corollary}[theorem]{Corollary}
\newtheorem{lemma}[theorem]{Lemma}
\newtheorem{lemma*}{Lemma}
\newtheorem{defn}[theorem]{Definition}
\newtheorem{remark}[theorem]{Remark}
\newtheorem*{rem}{Remark}
\newtheorem*{probaux}{Problem \protect\probnumber}
\newenvironment{prob}[1]{\def\probnumber{#1}\probaux \mbox{}\\}{\endprobaux}

\newenvironment{letter}{\begin{enumerate}[a)]}{\end{enumerate}}

\newlist{num}{enumerate}{1}
\setlist[num, 1]{label = (\alph*)}
\newcommand{\myitem}{\item}

\newcommand{\Prob}{\mathbb{P}}
\newcommand{\E}{\mathbb{E}}
\newcommand{\Q}{\mathbb{Q}}
\newcommand{\R}{\mathbb{R}}
\newcommand{\N}{\mathbb{N}}
\newcommand{\Z}{\mathbb{Z}}
\newcommand{\F}{\mathcal{F}}
\newcommand{\C}{\mathbb{C}}
\newcommand{\Cl}{\operatorname{Cl}}
\newcommand{\disc}{\operatorname{disc}}
\newcommand{\Gal}{\operatorname{Gal}}
\newcommand{\Img}{\operatorname{Im}}
\newcommand{\eps}{\varepsilon}
\newcommand{\ind}{\mathbbm{1}}

\newcommand\ang[1]{\left\langle#1\right\rangle}

\renewcommand{\L}{\mathbf{L}}

\renewcommand{\d}{\partial}
\newcommand*\Lapl{\mathop{}\!\mathbin\bigtriangleup}
\newcommand{\mat}[4] {\left(\begin{array}{cccc}#1 & #2\\ #3 & #4 \end{array}\right)}
\newcommand{\mattwo}[2] {\left(\begin{array}{cc}#1\\ #2 \end{array}\right)}
\newcommand{\pphi}{\varphi}
\newcommand{\eqmod}[1]{\overset{\bmod #1}{\equiv}}
\renewcommand{\O}{\mathcal{O}}
\renewcommand{\subset}{\subseteq}
\newcommand{\fr}[1]{\mathfrak{#1}}
\newcommand{\p}{\mathfrak{p}}
\newcommand{\m}{\mathfrak{m}}
\newcommand{\supp}{\mathrm{supp}}
\newcommand{\limto}[1]{\xrightarrow[#1]{}}

\newcommand{\under}[2]{\mathrel{\mathop{#2}\limits_{#1}}}

\newcommand{\underwithbrace}[2]{  \makebox[0pt][l]{$\smash{\underbrace{\phantom{%
    \begin{matrix}#2\end{matrix}}}_{\text{$#1$}}}$}#2}

\title{Zeros of Optimal Functions in the Cohn-Elkies Linear Program}
\maketitle

\begin{abstract}
In a recent breakthrough, Viazovska and Cohn, Kumar, Miller, Radchenko, Viazovska solved the sphere packing problem in $\mathbb{R}^8$ and $\mathbb{R}^{24}$, respectively, by exhibiting explicit optimal functions, arising from the theory of weakly modular forms, for the Cohn-Elkies linear program in those dimensions. These functions have roots exactly at the lengths of points of the corresponding optimal lattices: $\{\sqrt{2n}\}_{n\geq 1}$ for the $E_8$ lattice, and $\{\sqrt{2n}\}_{n\geq 2}$, for the Leech lattice. The constructions of these optimal functions are in part motivated by the locations of the zeros. But what are the roots of optimal functions in other dimensions? We prove a number of theorems about the location of the zeros of optimal functions in arbitrary dimensions. In particular, we prove that distances between root lengths are bounded from above for $n \geq 1$ and not bounded from below for $n \geq 2$, and that the root lengths have to be arbitrarily close for arbitrarily long, that is, for any $C, \eps > 0$, there is an interval of length $C$ on which the root lengths are at most $\eps$ apart. We also establish a technique that allows one to improve a non-optimal function in some cases. 
\end{abstract}

\section{Introduction}
The sphere packing problem asks for the maximal fraction $\Delta_n$ of $\R^n$ that can be occupied by open unit balls. While this natural geometrical question has interested mathematicians for at least half a millennium, the exact values of $\Delta_n$ are only known for $n = 1$, $2$, $3$ (\cite{kepler}), and, as of recently, $8$ and $24$ (\cite{dim8}, \cite{dim24}). The upper bounds on $\Delta_8$ and $\Delta_{24}$ rely on a theorem of Cohn and Elkies:
\begin{theorem}[\cite{CEtheorem}, Theorem $3.2$]\label{CE}
Let $r > 0$, and let $f: \R^n \to \R$ be an even function satisfying the following conditions:
\begin{enumerate}
\item $f(0) = \hat{f}(0) > 0$;
\item $f(x) \leq  0$ for $\norm{x} \geq r$;
\item $\hat{f}(t) \geq 0$ for all $t$;
\item $f, \hat{f} \in L^1(\R^n).$ \footnote{Originally, the theorem included a stronger decay at infinity condition, which was removed in Section $9$ of \cite{CohnKumar}. Technically, as stated in \cite{CohnKumar} or \cite[Theorem $3.3$]{CohnZhao}, the theorem applies to an even broader range of functions than as stated above; however, the arguments of the paper carry over verbatim for that larger collection of functions.}
\end{enumerate}
Then:
$$\Delta_n \leq \frac{\pi^{n/2}}{\Gamma(n/2 + 1)} \left(\frac{r}{2}\right)^n.$$ 
\end{theorem}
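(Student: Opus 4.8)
The plan is to reduce to periodic packings and then run the standard Poisson-summation argument. First I would invoke the well-known fact that $\Delta_n$ equals the supremum of densities over \emph{periodic} packings, so it is enough to bound the density of an arbitrary periodic packing. Write such a packing as a union of $N$ translates $v_1,\dots,v_N$ of a lattice $\Lambda\subset\R^n$ of covolume $|\Lambda|$, so that the centers are $\{v_j+\lambda : 1\le j\le N,\ \lambda\in\Lambda\}$, any two distinct ones at distance $\ge r$, and the packing density equals $\frac{N}{|\Lambda|}\cdot\frac{\pi^{n/2}}{\Gamma(n/2+1)}(r/2)^n$. Thus it suffices to prove $N\le|\Lambda|$.

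Next I would introduce the quantity
$$S \;=\; \sum_{j,k=1}^{N}\ \sum_{\lambda\in\Lambda} f(\lambda + v_j - v_k)$$
and estimate it two ways. From above: the only term in which the argument of $f$ vanishes is $j=k$, $\lambda=0$ (using that the $v_j$ are inequivalent mod $\Lambda$), contributing $N f(0)$; every other term evaluates $f$ at a nonzero difference of two centers, which has norm $\ge r$, hence is $\le 0$ by hypothesis (2), so $S\le N f(0)$. From below: applying Poisson summation to the inner sum for each fixed $(j,k)$ gives $\sum_{\lambda\in\Lambda} f(\lambda + v_j - v_k) = |\Lambda|^{-1}\sum_{t\in\Lambda^*}\hat f(t)\,e^{2\pi i\langle t, v_j-v_k\rangle}$, and summing over $j,k$ and rearranging yields
$$S \;=\; \frac{1}{|\Lambda|}\sum_{t\in\Lambda^*}\hat f(t)\,\Bigl|\sum_{j=1}^N e^{2\pi i\langle t, v_j\rangle}\Bigr|^2 \;\ge\; \frac{\hat f(0)\,N^2}{|\Lambda|},$$
where hypothesis (3) lets me discard every term but $t=0$. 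Comparing the two bounds and using $f(0)=\hat f(0)>0$ from hypothesis (1) gives $\hat f(0)N^2/|\Lambda|\le N\hat f(0)$, i.e.\ $N\le|\Lambda|$, which is what was needed; taking the supremum over periodic packings then gives $\Delta_n\le\frac{\pi^{n/2}}{\Gamma(n/2+1)}(r/2)^n$.

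I expect the main obstacle to be analytic rather than combinatorial: under only the integrability hypothesis (4), neither the pointwise validity of Poisson summation nor the absolute convergence required to interchange the sums over $j,k$ and over $\Lambda^*$ is automatic. The way around this is a regularization: convolve $f$ with a narrow Gaussian (equivalently, multiply $\hat f$ by a Gaussian) to obtain a Schwartz-class function for which Poisson summation is classical, carry out the argument above for the mollified function, and then let the Gaussian width shrink, controlling the limits via dominated convergence together with the continuity and sign information supplied by hypotheses (1)--(3). This is precisely the strengthening of the original statement recorded in \cite{CohnKumar} and \cite{CohnZhao} that the footnote alludes to; granting it, the displayed bound on $\Delta_n$ follows.
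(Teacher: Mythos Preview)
The paper does not contain a proof of this theorem at all: it is quoted as background, with a citation to \cite{CEtheorem} (and a footnote pointing to the relaxed hypotheses in \cite{CohnKumar,CohnZhao}). There is therefore nothing in the paper to compare your argument against.

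That said, your sketch is exactly the classical Cohn--Elkies argument as in \cite{CEtheorem}: pass to periodic packings, form the double sum $S=\sum_{j,k}\sum_{\lambda\in\Lambda}f(\lambda+v_j-v_k)$, bound it above by $Nf(0)$ using condition (2), bound it below by $\hat f(0)N^2/|\Lambda|$ via Poisson summation and condition (3), and conclude $N\le|\Lambda|$. Your identification of the only delicate point---justifying Poisson summation under the bare $L^1$ hypothesis (4)---and the Gaussian-mollification fix are also on target; this is precisely what \cite[Section~9]{CohnKumar} and \cite[Theorem~3.3]{CohnZhao} do. So the proposal is correct, but it is a reconstruction of the cited literature rather than of anything proved in the present paper.
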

Since the conditions of the theorem are invariant under rotation about the origin, by replacing $f$ with the average of its rotations, we can assume $f(x)$ is radial (that is, only dependent on $\norm{x}$).

The question arises: what is the smallest $r$ for which such a function $f:\R^n \to \R$ exists? In $2001$, Cohn and Elkies conjectured that this linear program yields sharp upper bounds in dimensions $n = 2, 8\text{ and }24$ (\cite{CEtheorem}). They also produced explicit functions $f$ that gave upper bounds on $\Delta_n$ that differed from the best-known lower bounds by less than $10^{-10}$ for these values of $n$. However, it was not until $2016$ that Viazovska (\cite{dim8}) produced a function that exactly matched the best-known lower bound in $\R^8$ (which arises from the packing where the sphere centers are placed at the points of the $E_8$ lattice). A function in $\R^{24}$ matching the lower bound of the Leech lattice was found shortly thereafter by Cohn, Kumar, Miller, Radchenko and Viazovska (\cite{dim24}), and the case $n = 2$ remains unresolved. 

Theorem \ref{CE} does not give any insight for how to search for functions with the minimal $r$, and the constructions in \cite{dim8, dim24} do not seem to carry over to other dimensions. The proof of Theorem \ref{CE} implies that if an optimal function were to match the lower bounds of the $E_8$ (or Leech) lattice packing, it would have to evaluate to $0$ for $\norm{x}$ equal to vector lengths of that lattice  -- $\{\sqrt{2n}\}_{n\geq 1}$ for the $E_8$ lattice in $\R^8$ and $\{\sqrt{2n}\}_{n\geq 2}$ for the Leech lattice in $\R^{24}$. This observation helped search for the corresponding optimal functions, which have forced roots at those lengths because of a $\sin(\pi \norm{x}^2/2)^2$ factor. In this paper, we look at the properties of zero sets of optimal functions for a general $n$. 

We call a function $f: \R^n \to \R$ \emph{acceptable} if $f$ is even and $f, \hat{f} \in L^1(\R^n)$. For $f$ acceptable, $f, \hat{f}$ must be continuous. 
We normalize the Fourier transform $\hat{f}$ of $f$ by $$\hat{f}(\xi) := \int_{\R^n} f(x) e^{-2 \pi i \xi \cdot x} dx.$$
We let $\F_n$ be the set of acceptable functions $f: \R^n \to \R$ such that $\hat{f}(0) = f(0) > 0, \hat{f} \geq 0$, and $f(x) \leq 0$ when $\norm{x} \geq 0$ for some $r >0$. For $f \in \F_n$, we let 
$$r(f) := \inf \{r \in \R : f(x) \leq 0 \text{ for } \norm{x} \geq r\}, \ \ R(n):= \inf_{f \in \F_n} r(f).$$
We say $f:\R^n \to \R$ is a \textbf{{Cohn-Elkies function}} if $r(f) = R(n)$. A Cohn-Elkies function is not known to exist for every value of $n$, but is believed to, and does not have to be unique for a given $n$.

For sets $A, B \subset \R^n$ and a point $p \in \R^n$, we define $A \pm B : = \{a   \pm b\  \vert \ a, b \in S\},$ $p \pm A := \{p \pm a | a \in A\}$, and $-A := \{-a | a \in A\}$, where points are summed as vectors. We let $d(A, B):= \underset{a \in A, b \in B}{\inf} d(a, b),$ where $d(a, b)= \norm{a-b}$ is the Euclidean distance between $a$ and $b$.  We let $N_\eps (A) := \{p : d(p, A) < \eps\}$ be the $\eps$-neighborhood of the set $A$.  For a set $A$, we let $\overline{A}$ be its closure, and if $A$ is Lebesgue measurable, we use $\mu(A)$ to denote its Lebesgue measure. We let $\R_+:= \{r \in \R: r > 0\}$.

For $p \in \R^n$, $r > 0$, we let $B^n_r(p) \subseteq \R^n$ be the closed $n$-ball of radius $r$ centered at $p$. By the unit ball in $\R^n$ we mean $B_1^n(0)$. We let $V_n := \frac{\pi^{(n/2)}}{\Gamma(n/2 + 1)}$ be the volume of the unit ball in $\R^n$. By \emph{length} of an element $z \in \R^n$ we mean its distance from the origin, $\norm{z}$. For a subset $A \subseteq \R^n$, we let $\L({A}): = \{\norm{a} \ \vert \ a \in A\}$ be the set of lengths of elements of $A$. We let $A^c$ denote the complement of $A$. For a function $f: \R^n \to \R$, we call the set $\{z \in \R^n: f(z) = 0\}$ the zero set of $f$.

The paper is structured as follows. First, we prove the main theorem:
\begin{theorem}\label{main}
Let $f: \R^n \to \R$, $f \in \F_n$ be a radial Cohn-Elkies function. Let $r := r(f) = R(n)$. Let $S \subseteq \R^n$ be a compact set of measure $\mu(S) > 1$. Then:
\begin{enumerate}
\item There exist $z \in (S - S) \cap \{x \in \R^n : \norm{x} \geq r\}$ such that $f(z) =  0$.
\item There exist $z' \in S - S$ such that $\hat{f}(z') = 0$.
\end{enumerate}
\end{theorem}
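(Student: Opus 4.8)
The plan is to use the Poisson summation formula over lattices together with the extremality of $f$. Both parts follow the same template, so let me describe part (1) in detail and indicate the modification for part (2).

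The key observation is that $\mu(S) > 1$ forces $S - S$ to be "large" relative to any lattice of covolume slightly less than $1$. Concretely, if $\Lambda \subseteq \R^n$ is a full-rank lattice with covolume $\operatorname{covol}(\Lambda) < \mu(S)$, then the translates $\{S + \lambda\}_{\lambda \in \Lambda}$ cannot be pairwise disjoint (their densities would sum to more than $1$), so there exist distinct $\lambda_1, \lambda_2 \in \Lambda$ and $s_1, s_2 \in S$ with $s_1 + \lambda_1 = s_2 + \lambda_2$, i.e. $\lambda_2 - \lambda_1 = s_1 - s_2 \in (S - S) \cap (\Lambda \setminus \{0\})$. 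So $S - S$ meets every such lattice nontrivially. Now suppose toward a contradiction that $f(z) \neq 0$ — hence $f(z) < 0$, since $\norm{z} \geq r$ — for \emph{every} $z \in (S-S) \cap \{\norm{x} \geq r\}$. I want to build, for a suitable lattice $\Lambda$, a contradiction with the Poisson summation identity
\[
\sum_{\lambda \in \Lambda} f(\lambda) = \frac{1}{\operatorname{covol}(\Lambda)} \sum_{\mu \in \Lambda^*} \hat{f}(\mu).
\]
The right-hand side is $\geq \frac{1}{\operatorname{covol}(\Lambda)} \hat f(0) > 0$ since $\hat f \geq 0$. On the left, $f(0) = \hat f(0)$, and the nonzero lattice vectors split into those of length $< r$ and those of length $\geq r$; the latter contribute $\leq 0$, and under our assumption at least one nonzero vector of $\Lambda$ — the one lying in $S - S$ — has length $\geq r$ (after possibly rescaling $S$, or choosing $\Lambda$ appropriately so that the witnessing difference has length $\geq r$) and contributes \emph{strictly} negatively.

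The subtlety is that the lattice vector produced by the pigeonhole argument above is only guaranteed to lie in $S - S$, not necessarily in the region $\norm{x} \geq r$, and the sum over $\Lambda$ contains short vectors that might contribute positively. To handle this, I expect the argument must run through the contrapositive of the \emph{optimality} of $f$: if $f$ had no zeros on $(S-S)\cap\{\norm x \ge r\}$, one could perturb $f$ slightly — subtracting a small multiple of a bump supported near that compact zero-free set, or rescaling — to produce a new admissible function with strictly smaller $r$, contradicting $r(f) = R(n)$. This is presumably where the paper's promised "technique that allows one to improve a non-optimal function" enters. So the real plan is: (i) apply the measure-theoretic pigeonhole to a one-parameter family of rescaled lattices $t\Lambda_0$ to guarantee that $S - S$ contains arbitrarily-chosen-length differences in a controlled way; (ii) feed this into Poisson summation to show that optimality forces $f$ to vanish somewhere on $(S-S) \cap \{\norm x \ge r\}$, else a strict inequality $\sum_{\Lambda} f < 0 \le \sum_{\Lambda^*}\hat f /\operatorname{covol}$ would already be a contradiction, or else an explicit improvement is available. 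The main obstacle is precisely controlling the \emph{short} vectors: one must choose the lattice (its shape and scale) so that it has no vectors of length in the "bad" range, or so that their positive contribution is dominated.

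For part (2), one applies Poisson summation in the dual direction: summing $\hat f$ over a lattice $\Lambda$ and $f$ over $\Lambda^*$, where now one arranges $\operatorname{covol}(\Lambda^*) < \mu(S)$ so that $S - S$ meets $\Lambda^* \setminus \{0\}$; since $\Lambda^*$ can be taken with all nonzero vectors of length $\geq r$ (choose $\Lambda^*$ sparse), the sum $\sum_{\Lambda^*} f \le f(0)$ with equality iff $f$ vanishes on all of $\Lambda^* \setminus\{0\}$, while $\sum_\Lambda \hat f \ge \hat f(0) = f(0)$; forcing equality throughout, hence $\hat f$ vanishes on $\Lambda \setminus \{0\} \supseteq$ a point of $S-S$. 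Here the geometry is easier because the constraint $\hat f \geq 0$ has no "radius $r$" caveat, so no short-vector problem arises; I expect part (2) to be essentially a clean pigeonhole-plus-Poisson argument, and part (1) to carry all the difficulty.
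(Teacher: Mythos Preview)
Your proposal has a genuine gap: the Poisson-summation-over-lattices approach does not close, and the missing ingredient is precisely the one the paper uses.

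For part (1), you correctly identify the ``short vector problem'' --- the lattice point that pigeonhole produces in $S-S$ need not have length $\geq r$, and other short lattice vectors may contribute positively to $\sum_\Lambda f$ --- but you do not resolve it. Your suggested fix, ``subtracting a small multiple of a bump supported near that compact zero-free set,'' is in fact the right instinct, but you never identify the bump. For part (2), your argument requires a lattice $\Lambda^*$ with minimum distance $\geq r = R(n)$ and covolume in $[1,\mu(S))$. When $\mu(S)$ is only barely larger than $1$, the existence of such a lattice is essentially equivalent to the lattice sphere-packing density attaining the Cohn--Elkies bound, which is not known (and is expected to fail) in general dimension. Moreover, even granting such a lattice, equality in Poisson forces $\hat f$ to vanish on $\Lambda\setminus\{0\}$, whereas the point of $S-S$ you have produced lies in $\Lambda^*$, not $\Lambda$; these do not match unless the lattice is self-dual, which forces covolume exactly $1$ and kills the pigeonhole step.

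The paper bypasses lattices entirely. The correct bump is $h:=\ind_S*\ind_{-S}$, which is supported exactly on $S-S$, has $\hat h=|\widehat{\ind_S}|^2\geq 0$, and --- this is where the hypothesis $\mu(S)>1$ enters cleanly --- satisfies $h(0)=\mu(S)<\mu(S)^2=\hat h(0)$. If $f$ has no zero on $(S-S)\cap\{\norm{x}\geq r\}$ then $f$ is bounded away from $0$ there (compactness), so $F:=f+\alpha h$ is still $\leq 0$ for $\norm{x}\geq r$ when $\alpha>0$ is small, while $\hat F\geq 0$ automatically and $F(0)<\hat F(0)$. A rescaling then produces an element of $\F_n$ with strictly smaller $r$, contradicting optimality. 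Part (2) is the same with $F:=f-\alpha\hat h$. The role of $\mu(S)>1$ is thus purely the inequality $h(0)<\hat h(0)$, not a pigeonhole over translates.
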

Then we prove some corollaries of Theorem \ref{main}, for example that the zero set is unbounded, the distances between lengths of zeros are bounded from above and not bounded from below. \\

\section{Main result and its corollaries}

\begin{proof}[Proof of Theorem \ref{main}]
Let $Z$ denote the zero set of $f$, and suppose for contradiction that $S - S$ contains no $z \in Z$ with $\norm{z} \geq r$. 
Let 
$$h(x) :=  \ind_S* \ind_{-S}(x) = \int_{\R^n} \ind_S(t)  \ind_{-S}(x-t) dt = \int_{S} \ind_{S}(t - x)  dt = \mu(S \cap (S + x))  .$$ 
Since $S$ is compact, $\abs{h}$ is bounded from above. Moreover, the set $T:= (S - S) \cap \{x \ \vert \ \norm{x} \geq r\}$ is also compact, and by assumption $f$ is strictly negative on $T$, and thus there exists some $c < 0$ such that $f(x) < c$ for all $x \in T$. Hence, we can find some $\alpha > 0$ such that 
$$F(x) := f(x) + \alpha h(x) \leq 0$$ for all $x \in \R^n$ with $\norm{x} \geq r$. 

Now, $h$ has compact support $\supp(h) \subset \overline{S - S} = S-S,$ and $\abs{h}$ is bounded, so of course $h \in L^1(\R^n)$. Moreover, $\hat{h} = |\widehat{\ind_S}|^2 \in L^1(\R^n)$ as well, because $\ind_S \in L^2(\R^n)$ and Fourier transform preserves the $L^2$ norm. Hence, both $h$ and $\hat{h}$ are acceptable.

Next, note  $$\widehat{F} = \hat{f} + \alpha \hat{h} =  \hat{f} + \alpha \abs{\widehat{\ind_S}}^2 \geq 0$$ for all $x \in \R^n$. 

Lastly, note that 
$$h(0) = \int_{\R^n} \ind_S(t) \ind_{-S}(-t) dt = \mu(S) <\mu(S)^2 = \left(\int_{\R^n}  \ind_S(t) dt \right)^2 = \hat{h}(0),$$ so $$F(0) = f(0) + \alpha h(0) < \hat{f}(0) + \alpha \hat{h}(0) = \widehat{F}(0).$$ Let $c > 1$ be such that $F(0) = \widehat{F}(0)/c^n$, and let $F_c(x):= F(cx)$. Then:
\begin{itemize}
\item $\widehat{F_c}(0) = \widehat{F}(0)/c^n = F(0) = F_c(0)$;
\item $ \widehat{F_c}(x) = \widehat{F}(x/c)/c^n \geq 0$ for all $x$ since $\widehat{F} \geq 0$;
\item $F_c(x) \leq 0$ for all $x$ with $\norm{x} \geq r/c$;
\item $F_c$ is acceptable because $F$ is acceptable as a linear combination of acceptable functions. 
\end{itemize} 
Thus, we have found a function $F_c \in \F_n$ with $r(F_c) = r/c < r$. This is a contradiction.
\\

The proof of the second statement is almost identical. Suppose for contradiction that $S - S$ contains no zero of $\hat{f}$. Define the function $h$ in the same way as above. Since by hypothesis $\hat{f}$ is strictly positive on the support of $h$, we can find $\alpha > 0$ such that $\hat{f}(x) >  \alpha h(x)$ for all $x$. Let $$F(x):= f(x) - \alpha \hat{h}(x) = f(x) - \alpha \abs{\widehat{\ind_S}}^2 \leq f(x),$$ so $F(x) \leq 0$ for $\norm{x} \geq r$. Then $\widehat{F}(x) = \hat{f}(x) - \alpha h(x) \geq 0$ everywhere. Moreover, we again have 
$$F(0) = f(0) - \alpha \hat{h}(0) < \hat{f}(0) - \alpha\abs{h(0)}= \widehat{F}(0) .$$ Repeating the last step of the proof of the first part, we again arrive to a contradiction.
\end{proof}
The proof of Theorem \ref{main} gives us a technique to improve functions $f \in \F_n$ with $r(f) > R(n)$ in certain cases: 

\begin{corollary}
Let $f \in \F_n$ and suppose there exists $S \subseteq \R^n$ with $\mu(S) > 1$ such that $S-S$ contains no zeros of $f$. Then there exists $\alpha > 0$ and $c > 1$ such that the function $$F_c(x) := f(cx) + \alpha \cdot \ind_S* \ind_{-S}(cx)$$ satisfies $r(F_c) < r(f).$ 

Similarly, let $f \in \F_n$ and suppose there exists $S \subseteq \R^n$ with $\mu(S) > 1$ such that $S-S$ contains no zeros of $\hat{f}$. Then there exists $\alpha > 0$ and $c > 1$ such that the function $$F_c(x) := f(cx) - \alpha \cdot \abs{\widehat{\ind_S} (cx)}^2$$ satisfies $r(F_c) < r(f).$ 

In both cases, the parameters $\alpha, c$ can be chosen as in the proof of Theorem \ref{main}.
\end{corollary}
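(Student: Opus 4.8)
The plan is to observe that this corollary is simply the contrapositive packaging of the proof of Theorem \ref{main}, stripped of the optimality hypothesis: nowhere in that proof did we actually use $r(f) = R(n)$ except to derive the final contradiction. So I would re-run the argument directly as a construction. Given $f \in \F_n$ and $S$ with $\mu(S) > 1$ such that $S - S$ contains no zeros of $f$, set $h := \ind_S * \ind_{-S}$, which is bounded with compact support $S - S$, hence $h \in L^1(\R^n)$, and $\hat h = |\widehat{\ind_S}|^2 \in L^1(\R^n)$ since $\ind_S \in L^2$ and Plancherel applies; so $h, \hat h$ are acceptable. Let $r := r(f)$ and $T := (S - S) \cap \{\norm{x} \geq r\}$, a compact set on which $f$ is continuous and (by hypothesis, since $T$ contains no zeros of $f$ and $f \leq 0$ there) strictly negative, so $f < c_0 < 0$ on $T$ for some constant $c_0$; since $h$ is bounded we may pick $\alpha > 0$ small enough that $F(x) := f(x) + \alpha h(x) \leq 0$ for all $\norm{x} \geq r$.

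Next I would check the two remaining Cohn-Elkies conditions for $F$ and then rescale. We have $\widehat{F} = \hat f + \alpha |\widehat{\ind_S}|^2 \geq 0$ everywhere. At the origin, $h(0) = \mu(S) < \mu(S)^2 = \hat h(0)$, so $F(0) = f(0) + \alpha h(0) < \hat f(0) + \alpha \hat h(0) = \widehat{F}(0)$. Choosing $c > 1$ with $F(0) = \widehat{F}(0)/c^n$ and setting $F_c(x) := F(cx) = f(cx) + \alpha \ind_S * \ind_{-S}(cx)$, the standard dilation identities give $\widehat{F_c}(0) = \widehat{F}(0)/c^n = F(0) = F_c(0)$, $\widehat{F_c}(x) = \widehat{F}(x/c)/c^n \geq 0$, and $F_c(x) \leq 0$ whenever $\norm{x} \geq r/c$; also $F_c$ is acceptable as a scaled linear combination of acceptable functions. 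Hence $F_c \in \F_n$ and $r(F_c) \leq r/c < r = r(f)$, which is the claim; and $\alpha, c$ were chosen exactly as in the proof of Theorem \ref{main}.

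For the second assertion I would argue symmetrically, mirroring the second half of the proof of Theorem \ref{main}: since $\hat f$ is continuous and has no zeros on the compact set $S - S = \supp(h)$, it is bounded below by a positive constant there, so for small $\alpha > 0$ we get $\hat f > \alpha h$ on $S - S$ (and trivially off it, where $h = 0$), hence $\hat f - \alpha h \geq 0$ everywhere; set $F(x) := f(x) - \alpha \hat h(x) = f(x) - \alpha |\widehat{\ind_S}(x)|^2 \leq f(x)$, so $F \leq 0$ for $\norm{x} \geq r$ and $\widehat{F} = \hat f - \alpha h \geq 0$. Again $F(0) = f(0) - \alpha \hat h(0) < \hat f(0) - \alpha h(0) = \widehat F(0)$, and the same dilation by the appropriate $c > 1$ produces $F_c(x) = f(cx) - \alpha |\widehat{\ind_S}(cx)|^2 \in \F_n$ with $r(F_c) \leq r/c < r(f)$.

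There is essentially no obstacle here — the proof is a verbatim reprise of the proof of Theorem \ref{main}. The only point requiring a word of care is that in the first part we need $f$ strictly negative (not merely $\leq 0$) on $T$ to find a strictly negative upper bound $c_0$; this is exactly where the hypothesis that $S - S$ contains no zeros of $f$ is used, together with compactness of $T$ and continuity of $f$. Everything else — acceptability of $h$ and $\hat h$, the strict inequality $h(0) < \hat h(0)$ coming from $\mu(S) > 1$, and the dilation bookkeeping — is identical to what was already carried out.
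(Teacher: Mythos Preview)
Your proposal is correct and matches the paper's approach exactly: the paper gives no separate proof of this corollary, simply remarking that the construction in the proof of Theorem \ref{main} applies verbatim with $\alpha, c$ chosen as there, which is precisely what you carry out. The one implicit assumption you make---that $T = (S-S)\cap\{\norm{x}\geq r\}$ is compact---is inherited from the compactness hypothesis on $S$ in Theorem \ref{main}, to which the corollary's final sentence explicitly defers.
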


Now we look at what happens when $S$ is a union of disjoint balls. The next result is not not new -- we state it as the simplest case of Theorem \ref{main}.
\begin{corollary} 
 For $n \in \N$, suppose a Cohn-Elkies function $f : \R^n \to \R$ exists. Then:  $$R(n) \leq  2V_n^{(-1/n)} =  2 \left(\frac{\Gamma(n/2 + 1)}{\pi^{n/2}}\right)^{1/n} = (1 + o(1)) \sqrt{\frac{2n}{\pi e }} .$$
\end{corollary}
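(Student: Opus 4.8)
The plan is to apply Theorem~\ref{main} with $S$ a Euclidean ball centered at the origin and then let its radius decrease to the critical value.

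First, since a Cohn-Elkies function is assumed to exist, fix one and replace it by the average of its rotations about the origin; as noted after Theorem~\ref{CE} this preserves membership in $\F_n$ and the value of $r(\cdot)$, and it produces a radial function, so we obtain a radial Cohn-Elkies function $f$ with $r(f) = R(n) =: r$. Next, for an arbitrary $\rho > V_n^{-1/n}$, set $S := B^n_\rho(0)$. This $S$ is compact with $\mu(S) = V_n \rho^n > 1$, so part (1) of Theorem~\ref{main} produces a point $z \in (S-S) \cap \{x \in \R^n : \norm{x} \geq r\}$ with $f(z) = 0$. Since $S - S = B^n_\rho(0) - B^n_\rho(0) = B^n_{2\rho}(0)$ — the inclusion $\subseteq$ is the triangle inequality, and any $v$ of length $\le 2\rho$ equals $\tfrac{v}{2} - (-\tfrac{v}{2})$ — we get $r \leq \norm{z} \leq 2\rho$. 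As $\rho$ was any number exceeding $V_n^{-1/n}$, letting $\rho \downarrow V_n^{-1/n}$ gives $R(n) = r \leq 2 V_n^{-1/n}$.

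Finally, the displayed identities are routine. The middle expression is just the definition of $V_n$ unwound: $2 V_n^{-1/n} = 2\bigl(\Gamma(n/2+1)/\pi^{n/2}\bigr)^{1/n}$. For the asymptotic, apply Stirling in the form $\Gamma(n/2+1) = (1+o(1))\sqrt{\pi n}\,\bigl(\tfrac{n}{2e}\bigr)^{n/2}$, take $n$-th roots, and use $(\pi n)^{1/(2n)} \to 1$ to obtain $V_n^{-1/n} = (1+o(1))\sqrt{n/(2\pi e)}$, hence $2V_n^{-1/n} = (1+o(1))\sqrt{2n/(\pi e)}$.

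Since Theorem~\ref{main} does all the real work, there is essentially no obstacle here; the only points requiring a moment's care are the computation $S - S = B^n_{2\rho}(0)$ and the strictness of the measure hypothesis $\mu(S) > 1$ in Theorem~\ref{main}, which is why we work with $\rho$ strictly above $V_n^{-1/n}$ and pass to the limit at the end rather than plugging in $\rho = V_n^{-1/n}$ directly. (One could equally well take $S$ to be any compact set of measure just above $1$; the round ball is simply the choice that makes $S-S$, and hence the resulting bound on $\norm{z}$, as small as possible.)
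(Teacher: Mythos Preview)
Your proof is correct and follows essentially the same approach as the paper's: apply Theorem~\ref{main} with $S$ a ball of radius slightly exceeding $V_n^{-1/n}$, use $S-S=B^n_{2\rho}(0)$ to bound $R(n)\le 2\rho$, and pass to the limit. You are in fact a bit more careful than the paper in explicitly radializing $f$ (since Theorem~\ref{main} is stated for radial $f$) and in spelling out the Stirling computation.
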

\begin{proof}
Let $f:\R^n \to \R $ be a Cohn-Elkies function, let $r:=V_n^{(-1/n)}$, and let $\eps > 0$. Note that $\mu(B^n_{r + \eps}) > 1.$ Thus Theorem \ref{main} applies, and so the set $$B^n_{r + \eps}(0) - B^n_{r\ + \eps}(0) = B^n_{2r + 2\eps}(0)$$ contains a zero $z$ of $f$ with $\norm{z} \geq R(n)$. Hence, $R(n) \leq \norm{z} \leq 2r + 2\eps$. Since this is true for any $\eps > 0$, it follows $$R(n) \leq 2r =  2V_n^{(-1/n)}.$$ The asymptotic expansion follows from Stirling's approximation.
\end{proof}
In all the known Cohn-Elkies functions (in dimensions $n = 1, 8, 24$) the distances between root lengths go to zero. In the rest of this section we prove results about distances between root lengths for an arbitrary dimension $n$. Recall that for a set $A \subset \R^n$, $\L(A) := \{\norm{a} \}_{a \in A} \subseteq \R_+.$
\begin{theorem}
Let $n \in \N$, let $f: \R^n \to \R$ be a radial Cohn-Elkies function, and let $Z \subseteq \R^n$ be the set of roots of $f$. Then there exists $C > 0$ such that for any $t > 0$,  $[t, t + C] \cap \L(Z) \neq \emptyset$. Similarly, let $Z'$ be the zero set of $\hat{f}$. Then there exists $C' > 0$ such that for all $t > 0$, $[t, t + C'] \cap \L(Z') \neq \emptyset.$ 
\end{theorem}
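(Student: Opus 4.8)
The plan is to derive this ``arbitrarily long near-overlaps'' statement from Theorem \ref{main} by feeding it a suitable sequence of large sets $S$. The key observation is that Theorem \ref{main}(1) says: for \emph{any} compact $S$ with $\mu(S) > 1$, the difference set $S - S$ meets the zero set of $f$ at a point of length $\geq r$. So if I can find a compact $S$ with $\mu(S)$ slightly bigger than $1$ whose difference set $S-S$ has \emph{all} its lengths lying in a narrow window $[t, t+C]$ (and all of them $\geq r$, which is automatic once $t \geq r$), then Theorem \ref{main} forces a root of $f$ with length in that window. The natural candidate is a thin spherical shell: take $S$ to be an annular shell $\{x : a \leq \norm{x} \leq b\}$ with $b - a$ tiny, positioned far from the origin so that $\mu(S) > 1$ is still achievable with a thin shell (the volume of a shell of fixed small thickness $\delta$ grows like $n V_n a^{n-1}\delta$ in the radius $a$, so for $a$ large enough the volume exceeds $1$ no matter how small $\delta$ is).

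First I would fix $C$ by computing the diameter of the difference set of such a shell. If $S = \{a \leq \norm{x} \leq b\}$, then $S - S = B^n_{2b}(0) \setminus (\text{possibly a hole})$ — actually $S-S$ is the full ball $B^n_{2b}(0)$ whenever $2a \leq b+a$, i.e. always, wait, that is wrong: $S - S$ contains $0$ and contains vectors of length up to $2b$, so $\L(S-S) = [0, 2b]$, which is not narrow. So a shell does not work directly. The fix is to translate: replace the shell by a \emph{cap} or, better, by a small ball placed far from the origin does not help either since $\L(B^n_\delta(p) - B^n_\delta(p)) = [0, 2\delta]$. The real issue is that any symmetric-ish set has $0 \in S - S$. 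The resolution is that we do \emph{not} need $\L(S-S)$ to be an interval near $t$; we need it to be \emph{contained} in $[0, t+C]$ while \emph{guaranteeing} the root has length $\geq r$ — but Theorem \ref{main} only guarantees the root lies in $(S-S) \cap \{\norm{x} \geq r\}$, so I need $(S-S) \cap \{\norm{x}\geq r\}$ to have all lengths in $[t, t+C]$, equivalently $(S - S) \cap \{\norm{x} \geq t + C\} = \emptyset$ and $r \leq t$. So I just need $S - S \subseteq B^n_{t+C}(0)$, i.e. $\operatorname{diam}(S) \leq t + C$, \emph{together with} a root of length $\geq r$ being forced, which needs $r \leq t$; but that only handles $t \geq r$, and I also must rule out that the forced root has length in $[r, t)$ rather than $[t, t+C]$.

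The clean way around this is to take $S$ to be a ball of radius $\rho := V_n^{-1/n} + \eps$ (so $\mu(S) > 1$) but centered at the origin, giving $S - S = B^n_{2\rho}(0)$, and conclude only that there is a root of length in $[r, 2\rho]$; that gives the interval $[r, R(n) + 2V_n^{-1/n} + \text{slack}]$ — wait, that's not what I want either. Let me reconsider: what the theorem wants is that root \emph{lengths} are syndetic, i.e. every interval $[t, t+C]$ meets $\L(Z)$. Equivalently, there is no gap of length $> C$ in $\L(Z) \cap [r, \infty)$. Suppose $(s, s+L) \cap \L(Z) = \emptyset$ with $s \geq r$ and $L$ large. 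I want to contradict this via Theorem \ref{main}: find compact $S$, $\mu(S) > 1$, with $(S - S) \cap \{\norm{x} \geq r\}$ forced to lie inside the forbidden gap, i.e. $\L((S-S) \cap \{\norm{x}\geq r\}) \subseteq (s, s+L)$; then the forced root of length $\geq r$ lands in the gap — contradiction. For this I need a compact $S$ with $\mu(S) > 1$ such that every $z \in S - S$ has either $\norm z < r$ or $\norm z \in (s, s+L)$: that is, $S - S$ avoids the closed shell $\{r \leq \norm z \leq s\}$ and the region $\{\norm z \geq s + L\}$. The second is just $\operatorname{diam}(S) < s + L$; the first says $S - S$ has no vector of length in $[r, s]$, which is a genuine constraint but satisfiable by a \emph{well-separated union of small balls}: if $S = \bigcup_{i=1}^N B^n_\delta(p_i)$ with the $p_i$ pairwise at distances either $< r$ or $> s$, then every difference of two points lies within $\delta$ of some $p_i - p_j$, so has length either $< r$ (if $\|p_i-p_j\| < r - 2\delta$... need $\|p_i - p_j\|$ small, say $= 0$, i.e. one ball) or $> s$ (if $\|p_i - p_j\| > s + 2\delta$), and we can make $\delta \to 0$. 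So: take \emph{one} ball is not enough volume-wise unless $\delta$ is large; take many balls $B^n_\delta(p_i)$ with all pairwise \emph{nonzero} distances in $(s + 2\delta,\ s + L - 2\delta)$, arranged (e.g. along a generic line, or as vertices of a simplex of appropriate size) so that $N V_n \delta^n > 1$ while $\operatorname{diam} < s + L$. For $L$ large compared to $s$ there is lots of room: put $N$ points in general position in a ball of radius $(s+L)/2$, mutually separated by more than $s$; this is possible for $N$ up to roughly $((s+L)/s)^n$, and each contributes volume $V_n \delta^n$ — but $\delta$ must be at most half the separation, fine, and we need $N V_n \delta^n > 1$, achievable when $L$ (hence $N$, hence available $\delta$) is large enough. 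Concretely one can take $\delta$ comparable to $s$ and $N$ a large constant; then $C$ emerges as an explicit multiple of $\max(r, s_0)$ for the relevant range, and a separate easy argument (the existing ``union of two balls'' corollary, applied near $0$) handles small $t \in [0, r]$, where in fact $\L(Z)$ already meets every interval because $Z$ has a root of length $\leq 2V_n^{-1/n}$ and... actually for $t < r$ we just need \emph{some} uniform $C$; absorbing it into the constant, I would just enlarge $C$ so that $[0, C] \supseteq [0, r]$ contains... hmm, it need not — $f$ could be strictly positive on a neighborhood of a sphere of radius $< r$. The honest statement only concerns $\L(Z)$, and roots can fail to exist below $r$; so I should double-check the theorem's claim is really ``for all $t > 0$'': re-reading, yes. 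So I also need: every interval $[t, t+C]$ with $t < r$ meets $\L(Z)$. Since $R(n) = r$, there is no function in $\F_n$ with smaller radius, which (by a standard argument) forces $f$ to have a root of length exactly $r$ — but that is one point, not enough for all small $t$.

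Resolving the small-$t$ case is the step I expect to be the main obstacle, and I think the right move is: by continuity and $r(f) = R(n)$, the function $f$ must have roots accumulating down to some level, or more precisely one shows every interval \emph{within} $[r, \infty)$ of length $C$ meets $\L(Z)$, and then separately observes that $f$ cannot be eventually positive on $\{r' \leq \norm x \leq r\}$ for any $r' < r$ combined with $f \leq 0$ past $r$ — but that still leaves $(0, r')$. I suspect the intended reading is that $C$ is allowed to depend on $f$ (it is — the statement says ``there exists $C > 0$''), and the small-$t$ behavior is handled by simply noting $f(0) > 0$ so $f$ has \emph{a} root $z_0$, and for $t$ in the compact range $[0, \norm{z_0}]$ one just takes $C \geq \norm{z_0}$ so that $[t, t+C] \ni \norm{z_0}$... no, if $t > 0$ then $[t, t+C]$ contains $\norm{z_0}$ iff $t \leq \norm{z_0}$, which holds for all $t \in (0, \norm{z_0}]$; and for $t > \norm{z_0}$ we are in the regime $t$ bounded below by a positive constant where the difference-set construction above applies with $s = \max(r, t)$... so in the end the single constant $C := \max\bigl(\norm{z_0},\ (\text{the explicit gap bound from the ball-packing construction valid for } s \geq \norm{z_0})\bigr)$ works for all $t > 0$. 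The statement for $\hat f$ and $Z'$ is identical using part (2) of Theorem \ref{main}, with the added convenience that we do not even need the length-$\geq r$ restriction, so the forbidden-gap set is just a single shell $\{u \leq \norm z \leq u + L\}$ to be avoided by $S - S$, handled by the same well-separated-small-balls construction. I would write the $f$ case in full and remark that the $\hat f$ case is the same, simpler argument.
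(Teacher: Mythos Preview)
Your plan has a real gap. You aim to contradict a single zero-free interval $(s,s+L)$ by taking $S=\bigcup_{i=1}^N B^n_\delta(p_i)$ with all pairwise center distances in $(s+2\delta,\,s+L-2\delta)$ and $N V_n\delta^n>1$. But you also need the same-ball differences (lengths $<2\delta$) to fall in $\{\norm{x}<r\}$, which forces $\delta<r/2$; hence $N>1/\bigl(V_n(r/2)^n\bigr)$. On the other hand, the number of points in $\R^n$ with all pairwise distances in a fixed interval $(s,s+L)$ is controlled by the ratio $(s+L)/s$: as $s\to\infty$ with $L$ fixed this ratio tends to $1$, and once it is close enough to $1$ the maximum is exactly $n+1$ (place one point at the origin; the Gram matrix of the remaining $n+1$ points is then within $O(\eps)$ of $\tfrac12(I+J)$, which has rank $n+1>n$, impossible for vectors in $\R^n$). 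The negation of the theorem hands you, for each $L$, a gap of length $L$ at an \emph{uncontrolled} position $s=s(L)$; nothing prevents $s(L)/L\to\infty$. So for any fixed $C$ you can use at most $n+1$ balls once $s$ is large, and your construction reaches volume $>1$ only if $(n+1)\,V_n(r/2)^n>1$. This already fails at $n=24$, where $R(24)=2$ and $25\cdot V_{24}=25\,\pi^{12}/12!\approx0.048$. Your remark ``take $\delta$ comparable to $s$'' is not available: $\delta<r/2$ regardless of $s$.

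The paper's proof avoids this by not confining all pairwise distances to one gap. Assuming for contradiction that arbitrarily long gaps exist, it builds $S_k=\bigcup_{i=1}^k B^n_\eps(p_i)$ inductively: given $S_k$ of diameter $D$, invoke the hypothesis to find a gap of length $D+2\eps$ at some $t$, and place $p_{k+1}$ at distance $t+\eps$ from $S_k$, so every distance from the new ball to $S_k$ lies in $[t,\,t+D+2\eps]\subset\L(Z)^c$. Different pairs $(i,j)$ thus land in \emph{different} gaps, so there is no simplex-type obstruction, and after finitely many steps $\mu(S_k)>1$ contradicts Theorem~\ref{main}. Your treatment of small $t$ via the first root is fine but becomes unnecessary in this contradiction framing.
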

\begin{proof}
Suppose for contradiction this is not the case. Let $\eps$ be such that $0 < 2\eps < \inf \L(Z)$ (recall that $f$ has to be continuous so $\inf \L(Z) > 0$). 
We will show by induction that for any $k \in \N$, there exists a set $S_k = \bigcup_{i = 1}^k b_i$ of $k$ disjoint balls of radius $\eps$ such that $S_k - S_k \subseteq Z^c$.  

\begin{itemize}
\item For $k = 1$, let $b_1:= B^n_\eps(0)$. Then: $S_1 -S_1 = B^n_{2\eps}(0) \subseteq Z^c$ by choice of $\eps$. 

\item Suppose we have constructed $S_k$. Let $D$ be the diameter of $S_k$. By assumption, we can find $t > 0$ such that $[t, t + D + 2 \eps]$ contains no elements of $\L(Z)$. Choose a point $p$ such that $d(p, S_k) = t + \eps$ and let $b_{k + 1} = B^n_\eps(p)$. For all $q \in S_k$, $d(p, q) \in [t + \eps, t + D + \eps]$, so $d(S_k, b_{k + 1}) \subseteq [t, t + D + 2\eps]$. Hence, $S_{k + 1} := S_k \cup b_{k + 1}$ satisfies $S_{k + 1} - S_{k + 1} \subseteq Z^c  $ as desired.
\end{itemize}
For sufficiently large $k$, $\mu(S_k) > 1$. This contradicts Theorem \ref{main}. The second one is proven verbatim by replacing $f$ with $\hat{f}$ and $Z$ with $Z'$.
\end{proof}

\begin{corollary}
Let $f: \R^n \to \R$ be a Cohn-Elkies function, and let $Z, Z'$ denote the sets of roots of $f$ and $\hat{f}$. Then $Z, Z'$ are unbounded. 
\end{corollary}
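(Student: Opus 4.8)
The plan is to read this off directly from the preceding theorem on the spacing of root lengths, by letting the parameter $t$ tend to infinity. First I would dispose of the hypothesis that $f$ is radial: if $f \in \F_n$ is a Cohn-Elkies function that is not radial, its average $\bar f$ over all rotations about the origin is still even, still lies in $\F_n$, and still satisfies $r(\bar f) \le r(f) = R(n)$, hence $r(\bar f) = R(n)$, so $\bar f$ is a radial Cohn-Elkies function. Moreover, at any point $z$ with $\norm{z} \ge r$ we have $\bar f(z) = \int f(\rho z)\, d\rho$ with each integrand $f(\rho z) \le 0$, so $\bar f(z) = 0$ forces $f(\rho z) = 0$ for almost every, hence by continuity every, rotation $\rho$; in particular $f(z) = 0$. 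Thus any zero of $\bar f$ of length at least $r$ is a zero of $f$ of the same length, and it suffices to prove the corollary for the radial function $\bar f$.

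With $f$ radial, I would apply the preceding theorem: there is $C > 0$ with $[t, t+C] \cap \L(Z) \ne \emptyset$ for every $t > 0$. Given an arbitrary $M > 0$, taking $t = M$ yields a root $z \in Z$ with $\norm{z} \in [M, M+C]$, so in particular $\norm{z} \ge M$; since $M$ was arbitrary, $\L(Z)$ is unbounded, and therefore so is $Z$. The statement for $\hat f$ follows in exactly the same way, using the constant $C'$ and the set $Z'$ from the same theorem, and noting that $\hat f$ is automatically radial once $f$ is, so no separate rotation-averaging reduction is needed there.

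I do not anticipate any real obstacle: all the substance is already contained in Theorem \ref{main} and the spacing theorem immediately preceding this corollary, and the present statement is just the observation that a uniform bound on the gaps of $\L(Z)$ over every window $[t, t+C]$ precludes $\L(Z)$ from being bounded. The only points warranting an explicit line are the reduction to the radial case and the remark that a zero of the radialization at length at least $r$ genuinely descends to a zero of $f$.
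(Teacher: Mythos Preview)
Your approach is correct and is exactly what the paper intends: the corollary is read off from the preceding spacing theorem by letting $t\to\infty$. The paper gives no explicit proof, so your writeup actually fills in more detail than the paper does, in particular the reduction from an arbitrary Cohn--Elkies function to its radialization.

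One small remark on that reduction. For $f$ you correctly argue that a zero of $\bar f$ at length $\ge r$ forces a zero of $f$ there, via the nonpositivity of each rotated integrand. Your last sentence for $\hat f$, however, says ``no separate rotation-averaging reduction is needed,'' which undersells what is still required: you do need to transfer zeros of $\widehat{\bar f}$ back to zeros of the original $\hat f$. This works by the same mechanism---$\widehat{\bar f}$ is the radialization of $\hat f$, and since $\hat f \ge 0$ everywhere the integrand argument gives that every zero of $\widehat{\bar f}$ (at any length) is already a zero of $\hat f$. So the step is there, it is just slightly easier than the corresponding step for $f$, and I would phrase it that way rather than saying nothing is needed.
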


\begin{theorem}
Let $f: \R^n \to \R$ be a radial Cohn-Elkies function and let $Z$ be the zero set of $f$ (or $\hat{f}$). Let $T \subseteq \R^n$ be any unbounded set, and let $\eps > 0$. Then for any $C>0$, $N_\eps(T - T)$ contains infinitely many $z \in Z$ with $\norm{z} > C$.
\end{theorem}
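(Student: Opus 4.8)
The plan is to reduce the assertion to the following one-zero statement: for every $C > 0$ there is at least one $z \in Z \cap N_\eps(T - T)$ with $\norm z > C$, where $Z$ denotes the zero set of $f$ (resp.\ $\hat f$). This suffices: given $C$, the statement produces $z_1 \in Z \cap N_\eps(T-T)$ with $\norm{z_1} > C$; applying it again with $C$ replaced by $\norm{z_1}$ produces $z_2 \in Z \cap N_\eps(T-T)$ with $\norm{z_2} > \norm{z_1}$; iterating gives infinitely many distinct zeros in $N_\eps(T-T)$, all of norm $> C$. It also suffices to treat small $\eps$, since shrinking $\eps$ shrinks $N_\eps(T-T)$, so I will assume $\eps < \inf \L(Z)$; this infimum is positive because $f$ (resp.\ $\hat f$) is continuous and $f(0) = \hat f(0) > 0$.

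Fix $C > 0$ and set $\rho := \eps/3$, so that $2\rho < \eps$ and $2\rho < \inf \L(Z)$. Since $T$ is unbounded, choose $p_1, p_2, \dots \in T$ recursively so that $\norm{p_{i+1}} > \norm{p_i} + C + 2\rho$; then for $i < j$ we have $\norm{p_j - p_i} \ge \norm{p_j} - \norm{p_i} > C + 2\rho$. Put $b_i := B^n_\rho(p_i)$ and $S_k := \bigcup_{i=1}^k b_i$. The $b_i$ are pairwise disjoint (their centers are more than $2\rho$ apart), $S_k$ is compact, and $S_k - S_k = \bigcup_{i,j} B^n_{2\rho}(p_i - p_j)$. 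Each $p_i - p_j$ lies in $T - T$ (including $p_i - p_i = 0$) and $2\rho < \eps$, so $S_k - S_k \subseteq N_\eps(T - T)$. Finally $\mu(S_k) = k\, V_n \rho^n \to \infty$, so $\mu(S_k) > 1$ for all large $k$.

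Fix such a $k$ and apply Theorem \ref{main}: part (1) if $Z$ is the zero set of $f$, part (2) if $Z$ is the zero set of $\hat f$. Either way we obtain a zero $z \in S_k - S_k \subseteq N_\eps(T-T)$ of the relevant function. The component $B^n_{2\rho}(0)$ of $S_k - S_k$ (the terms with $i = j$) contains no zero of $f$ (resp.\ $\hat f$), because $2\rho < \inf \L(Z)$; hence $z \in B^n_{2\rho}(p_i - p_j)$ for some $i \ne j$, and therefore $\norm z \ge \norm{p_i - p_j} - 2\rho > C$. This proves the reduced statement, and hence the theorem.

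The construction is essentially dictated by locating the difficulty: Theorem \ref{main} only guarantees a zero somewhere in a difference set (in part (1), with the harmless extra bound $\norm z \ge R(n)$), so the work is in choosing the test set $S$ so that its difference set is the union of one small zero-free ball about the origin and pieces that are simultaneously within $\eps$ of $T - T$ and at distance $> C$ from the origin --- which is exactly what spreading the ball-centers far apart along $T$ achieves. I expect the only delicate points to be the bookkeeping between closed balls and the open neighborhood $N_\eps(\cdot)$ (dealt with by taking $\rho < \eps/2$) and the fact that a forced zero cannot lie in the origin-centered component (dealt with by continuity of $f$, resp.\ $\hat f$, at $0$).
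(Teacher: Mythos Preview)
Your proof is correct and follows essentially the same approach as the paper: reduce to small $\eps$, pick points of $T$ that are at least roughly $C$ apart, apply Theorem \ref{main} to the union of small disjoint balls about those points, and rule out the origin component using $\eps < \inf \L(Z)$, then iterate to get infinitely many zeros. The only cosmetic difference is that the paper phrases the spread-out points as an unbounded subset $T' \subseteq T$ rather than an explicit sequence $p_1,p_2,\dots$, and uses radius $\eps/2$ instead of $\eps/3$.
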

\begin{proof}
Without loss of generality, $\eps < \inf \L(Z)$, so $N_\eps(0)$ contains no elements of $Z$. 

First, note that since $T$ is unbounded, the set $N_\eps(T - T)$ has infinite measure, is unbounded, and contains infinitely many disjoint closed balls of radius $\eps/2$. Selecting a finite collection of them of total measure greater than $1$ and applying Theorem \ref{main} we see that $N_\eps(T - T)$ contains an element of $Z$. 

Since $T$ is unbounded, for any $C >0$ we can find an unbounded $T' \subseteq T$ such that the elements of $T'$ are distance $C + 2\eps$ or more apart. Then all elements of $N_\eps(T' - T') \setminus N_\eps(0)$ are of length at least $C$. From the first observation, $N_\eps(T' - T')$ contains an element $z$, and since $N_\eps(0) \cap Z = \emptyset$, we can conclude $\norm{z} > C$. Thus $N_\eps(T - T)$ contains an element $z$ with $\norm{z} > C$. 

Lastly, there are infinitely many of such elements because we can keep increasing $C$ to be larger than the length of all the elements already chosen. 
\end{proof}
\begin{corollary}
Let $f: \R^n \to \R$ be a radial Cohn-Elkies function and let $Z$ be the zero set of $f$ (or $\hat{f}$). Let $\Lambda \subset \R$ be a lattice. Then $N_\eps(\Lambda)$ contains infinitely elements of $\L(Z)$.
\end{corollary}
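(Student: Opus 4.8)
The plan is to obtain this as an immediate consequence of the preceding theorem, applied to a judiciously chosen unbounded set $T\subseteq\R^n$ whose difference set has all its lengths lying in $\Lambda$. Write $\Lambda=a\Z$ for some $a>0$ (every nontrivial lattice in $\R$ has this shape; if one allows $\Lambda=\{0\}$ the statement is vacuous once $\eps<\inf\L(Z)$). Fix a unit vector $e\in\R^n$ and put $T:=\{k\,a\,e:k\in\N\}$. This set is unbounded, and $T-T=\{m\,a\,e:m\in\Z\}$, so every element of $T-T$ is an integer multiple of $a$ times $e$; in particular $\L(T-T)\subseteq a\Z=\Lambda$.

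Next I would invoke the preceding theorem with this $T$, the given $\eps>0$, and an arbitrary $C>0$: it tells us that $N_\eps(T-T)$ contains infinitely many $z\in Z$ with $\norm{z}>C$. For any such $z$, pick $w\in T-T$ with $\norm{z-w}<\eps$; then the reverse triangle inequality gives $\bigl|\norm{z}-\norm{w}\bigr|\le\norm{z-w}<\eps$, and since $\norm{w}\in\L(T-T)\subseteq\Lambda$ we conclude $d(\norm{z},\Lambda)<\eps$, i.e.\ $\norm{z}\in N_\eps(\Lambda)$. Hence $N_\eps(\Lambda)$ contains $\norm{z}$ for every such zero $z$.

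Finally, to pass from ``contains lengths of zeros'' to ``contains infinitely many elements of $\L(Z)$'', I would let $C$ range over all positive reals: for each $C$ there is a zero $z$ with $\norm{z}>C$ and $\norm{z}\in N_\eps(\Lambda)$, so $\L(Z)\cap N_\eps(\Lambda)$ is unbounded, hence infinite. There is no genuine obstacle here: the only points to notice are that collinear, $\Lambda$-spaced sample points force $\L(T-T)$ into $\Lambda$, and that nearby vectors have nearby lengths, after which the preceding theorem does all the work.
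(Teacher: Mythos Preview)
Your proposal is correct and is exactly the intended deduction: the paper states this corollary without proof immediately after the theorem, and the natural (and only reasonable) way to derive it is to embed $\Lambda=a\Z$ along a line in $\R^n$ via $T=\{kae:k\in\N\}$, apply the theorem, and pass to lengths via the reverse triangle inequality, then vary $C$ to force infinitely many distinct lengths. There is nothing to add.
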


In the rest of the section we show that the lengths of zeros of a Cohn-Elkies function have to be arbitrarily close on arbitrarily long intervals when $n \geq 2$ (Theorem \ref{strongestcor}). We begin with a few purely geometric lemmas (\ref{infiniteintervals}, \ref{subinterval}, \ref{geolemma}).

\begin{defn}
For a set $T \subseteq \R_+$ and $\gamma \in \R$, we define $T^\gamma \subset \R^2$ as:
$$T^\gamma := \{(\norm{t}\cos \gamma , \norm{t} \sin \gamma) \vert t \in T\}.$$ We let 
$$\ell^\gamma:= \R_+^\gamma$$ be the ray emitting from $0$ at angle $\gamma$ to the $x$ axis.
\end{defn}

\begin{lemma}\label{infiniteintervals}
Let $I_0 \subseteq \R$ be an interval and let $K$ be an infinite set of intervals contained in $I_0$ such that $\inf_{I \in K} \mu({I}) > 0$. Then there exists an infinite subset $L \subseteq K$ and an interval $J$  with $\mu(J) > 0$ such that $J \subseteq I$ for all $I \in L$. 
\end{lemma}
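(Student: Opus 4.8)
The plan is to reduce to a counting/pigeonhole argument on a finite partition of $I_0$. First I would discard finitely many elements of $K$ if necessary and rescale so that $I_0$ is bounded; more precisely, since $\inf_{I\in K}\mu(I) =: \delta > 0$, every $I \in K$ is an interval of length at least $\delta$, so its midpoint lies in a bounded region whenever $I \subseteq I_0$ — and if $I_0$ is unbounded I first note that all but finitely many $I \in K$ must actually be forced into a bounded window only if... no: instead, observe that we may simply intersect everything with a fixed bounded subinterval $I_0' \subseteq I_0$ that still contains infinitely many $I \in K$ entirely, which exists because the left endpoints of the $I \in K$, together with the constraint $\mu(I) \geq \delta$, cannot all escape to $\pm\infty$ while staying inside $I_0$ unless $I_0$ itself is bounded on the relevant side; in the unbounded case one passes to a subsequence landing in a single unit window. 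So without loss of generality $I_0$ is bounded, say $\mu(I_0) = \Lambda < \infty$.

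Next I would chop $I_0$ into $N := \lceil 2\Lambda/\delta \rceil$ consecutive closed subintervals $Q_1, \dots, Q_N$ each of length at most $\delta/2$. Each $I \in K$ has length $\geq \delta$, so $I$ fully contains at least one of the $Q_j$ (an interval of length $\geq \delta$ cannot fit strictly between two consecutive grid points spaced $\leq \delta/2$ apart — it must engulf a whole $Q_j$). Assign to each $I \in K$ such an index $j(I)$. Since $K$ is infinite and there are only $N$ possible values, by pigeonhole there is a single index $j^*$ with $L := \{ I \in K : j(I) = j^* \}$ infinite. Then $J := Q_{j^*}$ satisfies $J \subseteq I$ for all $I \in L$ and $\mu(J) > 0$ (take $J$ to be the interior if a degenerate overlap at endpoints is a concern, but with length exactly $\delta/2 > 0$ on the nose this is automatic up to shrinking slightly).

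The only genuine obstacle is the bookkeeping in the unbounded-$I_0$ case and making the "every long interval contains a full grid cell" claim airtight at the boundary (endpoints, half-open vs.\ closed). For the unbounded case: the intervals $I \in K$ all have length $\geq \delta$; tile $\R$ by the countably many cells $[k\delta/2, (k+1)\delta/2)$; each $I$ contains a full such cell; infinitely many $I$ then share a cell index by pigeonhole over a countable set — wait, that is not pigeonhole. Instead, in the unbounded case replace $I_0$ by a bounded subinterval: if infinitely many $I \in K$ had left endpoint $\to +\infty$ we could still only guarantee a shared cell by a compactness/subsequence argument, which does not work over $\R$. The correct fix is: the hypothesis only needs one bounded window to contain infinitely many of the $I$; if no bounded window does, then the midpoints of the $I$ form an infinite set with no bounded infinite subset, impossible since... it is not impossible. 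I would therefore simply \emph{add the hypothesis is used only in the bounded case in the paper}, or more honestly: pick any $I_1 \in K$; it is bounded; infinitely many later $I \in K$ need not meet it. So the clean route is: the paper applies this lemma with $I_0$ bounded, and I would prove it under that assumption, remarking that the general statement reduces to it by first selecting an infinite subfamily lying in a common bounded interval — which is exactly what must be argued, and the honest argument is the grid-pigeonhole above once boundedness is in hand. Thus the main work, and the only subtle point, is securing that initial bounded window; everything after is a one-line pigeonhole.
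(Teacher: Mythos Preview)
Your pigeonhole-on-a-grid argument for bounded $I_0$ is correct and genuinely different from the paper's route. The paper instead applies Bolzano--Weierstrass to the right endpoints $p_I := \sup I$: since $I_0$ is bounded these have a limit point $p$, and for the infinite subfamily $L$ with $|p_I - p| < c/4$ (where $c = \inf_{I\in K}\mu(I)$) one checks directly that $J := [p - 5c/8,\, p - 3c/8]$ lies inside every $I \in L$. Both proofs are essentially one line once boundedness is granted; yours is finitary and combinatorial, the paper's is a compactness argument, and neither dominates the other. Your long detour on the unbounded case is justified but should be cut: the lemma is in fact \emph{false} for unbounded $I_0$ (take $I_0 = \R$, $K = \{[n, n+1] : n \in \N\}$), and the paper's own proof silently invokes ``since $I_0$ is bounded'' without having stated that hypothesis. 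Since the only application, in Lemma~\ref{subinterval}, has $\mu(I_0) = C < \infty$, you should simply assume $I_0$ bounded and delete the surrounding hand-wringing.
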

\begin{proof}
  Let $c := \inf_{I \in K} \mu({I})$. For every $I \in K$, let $p_I := \sup I$ be the rightmost point of $I$. Since $K$ is infinite and $I_0$ is bounded, the set $\{p_I\}_{I \in K}$ has a limit point, so there is a point $p$ and an infinite subset $L \subseteq K$ such that $\norm{p_I - p} < c/4$ for all $I \in L$. Then the interval $J:= [p - 5c/8, p - 3c/8]$ is contained in all $I \in L$. 
\end{proof}
\begin{lemma}\label{subinterval}
Let $A \subseteq \R$ be such that for some $C, c > 0$, for any closed interval $J$ with $\mu(J) = C$, $J \cap A$ contains a closed interval of length $c$. Let $T \subseteq \R$ be unbounded, and let $I$ be an interval, $\mu({I}) = C$. 
Then there exists an interval $I' \subseteq I$ with $\mu(I') >0$ and an unbounded subset $T' \subseteq T$ such that $T' - I'  \subseteq A$.
\end{lemma}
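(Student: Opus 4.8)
The plan is to reduce the statement to a selection argument about a bounded family of length-$c$ subintervals of $I$, and then extract a common subinterval while keeping track of unboundedness. First, note $C<\infty$, so $I$ is bounded and $\overline{I}$ is compact. For each $t\in T$, the set $t-\overline{I}$ is a closed interval of length $C$, so by the hypothesis on $A$ there is a closed interval $K_t\subseteq (t-\overline{I})\cap A$ with $\mu(K_t)=c$. Reflecting, $I_t:=t-K_t$ is a closed subinterval of $\overline{I}$ with $\mu(I_t)=c$ and $t-I_t=K_t\subseteq A$. Thus it suffices to produce a single interval $I'$ of positive length and an unbounded $T'\subseteq T$ with $I'\subseteq I_{t'}$ for all $t'\in T'$ and $I'\subseteq I$: then $t'-I'\subseteq t'-I_{t'}=K_{t'}\subseteq A$ for every $t'\in T'$, i.e. $T'-I'\subseteq A$.

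To get the common subinterval I would run a compactness argument, being careful that Lemma \ref{infiniteintervals} only yields an \emph{infinite} subfamily, whereas here I need an \emph{unbounded} $T'$. Identify each $I_t$ with its left endpoint $a_t$, which lies in the bounded set $[\inf I,\ \sup I-c]$. Since $T$ is unbounded, pick $t_1<t_2<\cdots$ in $T$ with $t_k\to\infty$, and by Bolzano--Weierstrass pass to a subsequence (still written $(t_k)$) along which $a_{t_k}\to a$ for some $a$; this subsequence still tends to $\infty$. Choose $N$ with $|a_{t_k}-a|<c/4$ for all $k\ge N$. Then for $k\ge N$ the interval $I_{t_k}=[a_{t_k},a_{t_k}+c]$ contains the fixed interval $[a+c/4,\ a+3c/4]$, which has length $c/2>0$. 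Set $I'$ to be (a positive-length closed subinterval contained in $I$ of) $[a+c/4,\ a+3c/4]$ and $T':=\{t_k:k\ge N\}$, which is unbounded.

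The only cleanup is the boundary of $I$: a priori $[a+c/4,a+3c/4]\subseteq\overline{I}$ rather than $\subseteq I$, but $\overline{I}\setminus I$ has at most two points, so shrinking the interval by an arbitrarily small amount yields a positive-length closed interval $I'\subseteq I$; since this only removes points, the containments $I'\subseteq I_{t'}$ and hence $t'-I'\subseteq A$ are preserved. I expect this bookkeeping --- preserving unboundedness of $T'$ through the compactness step --- to be the only real subtlety; the geometric core (translate $I$ to hit the structured set $A$, apply the hypothesis, translate back) is immediate.
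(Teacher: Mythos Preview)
Your proof is correct and follows essentially the same route as the paper: translate $I$ by each $t\in T$, apply the hypothesis to extract a length-$c$ subinterval inside $A$, reflect back into $I$, and then use compactness of the endpoints to find a common positive-length subinterval. The paper packages the last step as a citation of Lemma~\ref{infiniteintervals} (itself a Bolzano--Weierstrass argument) and is in fact less explicit than you are about passing first to a sequence $t_k\to\infty$ so that the extracted infinite subfamily yields an \emph{unbounded} $T'$.
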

\begin{proof}
For each $t \in T$, the interval $I_t : = t - I$  has length $C$, hence by assumption there exists an interval $J_t \subseteq I$ such that $\mu(J_t) = c$ and $t - J_t  \subseteq A$. 
It remains to note that by Lemma  \ref{infiniteintervals}, there exists an unbounded $T' \subset T$ a positive measure interval $I'$ such that $I' \subset J_t$ for all $t \in T'$, and hence $t - I' \subseteq t - J_t \subseteq A$ for all $t \in T'$.
\end{proof}
\begin{lemma}\label{geolemma}
Let $B \subseteq \R^2 \setminus \{(0, 0)\}$ be a radial set such that the set $A:= \L(B) \cup -\L(B)$ satisfies the hypotheses of Lemma \ref{subinterval}. Then for all $k \in \N$ and $\eps > 0$, there exist points $p_1, \ldots, p_k \in \R^2$, an interval $[\alpha_k, \beta_k]$ and an unbounded set $T_k \subseteq \R_+$ such that:
\begin{enumerate}
\item For all $i \neq j$, $$\norm{p_i - p_j} > \eps;$$ 
\item The set $P_k:= \bigcup_{i = 1}^k p_i$ satisfies $$P_k - P_k \subseteq B^c;$$
\item For all $\gamma \in [\alpha_k, \beta_k]$, $$T_k^\gamma - P_k \subseteq B^c.$$
\end{enumerate}

\end{lemma}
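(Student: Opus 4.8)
The natural plan is induction on $k$, carrying conditions (1)--(3) as the inductive hypothesis; the work is concentrated in re-establishing (3) after a new cluster point has been added, and this is where Lemma~\ref{subinterval} is used. Throughout one uses that $B^c$ is radial (hence invariant under rotation and under $x\mapsto -x$) and that, by the hypothesis on $A$, far out the set $\L(B^c)=A\cap\R_{\ge 0}$ meets every interval of length $C$ in an interval of length $c$; in particular $\L(B^c)\cap\R_+$ is unbounded and contains intervals of length $c$ arbitrarily far out.

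\emph{Base case $k=1$.} Take $p_1:=(0,0)$, so (1) is vacuous and (2) is just $(0,0)\in B^c$, which holds since $B\subseteq\R^2\setminus\{(0,0)\}$. Let $T_1$ be an unbounded subset of $\L(B^c)\cap\R_+$ and let $[\alpha_1,\beta_1]$ be any nondegenerate interval. Since $\norm{z^\gamma}=z$ for $z\in T_1\subseteq\R_+$ and $B^c$ is radial, $T_1^\gamma\subseteq B^c$ for every $\gamma$, which is (3).

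\emph{Inductive step.} Given $P_k,[\alpha_k,\beta_k],T_k$, put $\gamma_0:=\tfrac{1}{2}(\alpha_k+\beta_k)$ (an interior point of $[\alpha_k,\beta_k]$) and $R:=\max_{i\le k}\norm{p_i}$. First I would build the new point: pick $t_0\in T_k$ with $t_0>R+\eps$ and set $p_{k+1}:=(t_0)^{\gamma_0}$. Then $\norm{p_{k+1}-p_i}\ge t_0-R>\eps$ (giving (1)), and since $\gamma_0\in[\alpha_k,\beta_k]$ and $t_0\in T_k$, condition (3) at stage $k$ gives $p_{k+1}-p_i=(t_0)^{\gamma_0}-p_i\in T_k^{\gamma_0}-P_k\subseteq B^c$ for all $i\le k$; together with $(0,0)\in B^c$ and the symmetry of $B^c$ this gives $P_{k+1}-P_{k+1}\subseteq B^c$ (condition (2)). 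It remains to produce an unbounded $T_{k+1}\subseteq\R_+$ and an interval $[\alpha_{k+1},\beta_{k+1}]$ with $T_{k+1}^\gamma-P_{k+1}\subseteq B^c$ throughout it. I would take $T_{k+1}\subseteq T_k$ and $[\alpha_{k+1},\beta_{k+1}]=[\gamma_0-\eta,\gamma_0+\eta]\subseteq[\alpha_k,\beta_k]$, so that $T_{k+1}^\gamma-P_k\subseteq B^c$ is inherited from stage $k$; only $\norm{t^\gamma-p_{k+1}}\in\L(B^c)$, for $t\in T_{k+1}$ and $\gamma\in[\gamma_0-\eta,\gamma_0+\eta]$, must still be arranged. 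Here one expands
\[
\norm{t^\gamma-(t_0)^{\gamma_0}}=\sqrt{\big(t-t_0\cos(\gamma-\gamma_0)\big)^2+\big(t_0\sin(\gamma-\gamma_0)\big)^2}=t-t_0\cos(\gamma-\gamma_0)+O(1/t)
\]
uniformly for $\abs{\gamma-\gamma_0}\le\eta$, so $\norm{t^\gamma-p_{k+1}}$ lies in the interval $t-[\,t_0\cos\eta-\theta^\ast,\ t_0\,]$ once $t\ge M'$, where $\theta^\ast=\theta^\ast(M')\to 0$ as $M'\to\infty$. Applying Lemma~\ref{subinterval} with the set $A$, the unbounded set $T:=T_k$, and an interval $I$ of length $C$ positioned so that the short interval $[\,t_0\cos\eta-\theta^\ast,\ t_0\,]$ is contained in the resulting $I'\subseteq I$, I obtain an unbounded $T'\subseteq T_k$ with $T'-I'\subseteq A$; then for $t\in T_{k+1}:=T'\cap[M',\infty)$ and $\gamma\in[\gamma_0-\eta,\gamma_0+\eta]$ one has $\norm{t^\gamma-p_{k+1}}\in t-I'\subseteq A$, and being positive this value lies in $A\cap\R_{\ge 0}=\L(B^c)$, so $t^\gamma-p_{k+1}\in B^c$. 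Since $T'$ is unbounded, so is $T_{k+1}$, and the induction closes.

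\emph{Main obstacle.} The delicate clause is ``positioned so that $[\,t_0\cos\eta-\theta^\ast,\ t_0\,]$ is contained in $I'$''. Its right endpoint is exactly $t_0$, which condition (2) forces to lie in $T_k$, whereas Lemma~\ref{subinterval} guarantees only that $I'\subseteq I$, not where in $I$ it sits. Reconciling these two demands is the real content of the step: one must either strengthen the inductive hypothesis so that $T_k$ is ``dense enough'' far out (for instance that $T_k$ meet every sufficiently distant interval of length $C$ in an interval of some definite positive length, a property of the $T_1$ above that can be propagated if one keeps track of the shrinking constants), or interchange the order of the choices so that $t_0\in T_k$ is selected from inside $I'$ only after $I'$ has been produced. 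One must also check that $\eta$ can be taken small enough and $M'$ large enough relative to $t_0$ and $\mu(I')$ that both the spread $t_0(1-\cos\eta)$ and the tail error $O(1/t)$ fit inside $I'$. Everything else — the base case, the placement of $p_{k+1}$, and the verification of (1) and (2) — is routine given the radiality of $B^c$ and the expansion above.
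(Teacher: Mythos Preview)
Your inductive skeleton is exactly the paper's, and you have correctly isolated the one real difficulty: getting the range of $\norm{t^\gamma-p_{k+1}}$ to land inside the subinterval $I'$ produced by Lemma~\ref{subinterval}. But the gap you flag is created by an unnecessary choice you made, and neither of your proposed repairs is the right one. By placing $p_{k+1}$ at the \emph{midpoint} $\gamma_0$ and insisting that the new angle window be symmetric, $[\gamma_0-\eta,\gamma_0+\eta]$, you force the projection $t_0\cos(\gamma-\gamma_0)$ to sweep only the interval $[t_0\cos\eta,\,t_0]$, whose right endpoint is pinned at $t_0$. You then need $I'$ to contain $t_0$, which Lemma~\ref{subinterval} does not promise; and your ``fix~2'' (pick $t_0\in T_k$ from inside $I'$ afterwards) fails because nothing guarantees $T_k\cap I'\neq\emptyset$.

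The paper dissolves the circularity by decoupling $t_0$ from $I'$. It places $p_{k+1}$ on an \emph{endpoint} ray (after rotating so $[\alpha_k,\beta_k]=[0,\beta]$, it takes $p\in T_k^{0}$), so that as $\gamma$ ranges over the \emph{entire} old window $[0,\beta]$ the projection $\norm{p}\cos\gamma$ sweeps out $I:=[\norm{p}\cos\beta,\norm{p}]$, which has length $>C$ once $\norm{p}>C/(1-\cos\beta)$. Now apply Lemma~\ref{subinterval} with this $I$ and $T=T_k$ to obtain $J\subseteq I$ and unbounded $T'\subseteq T_k$, and \emph{only then} choose $[\alpha_{k+1},\beta_{k+1}]\subseteq[0,\beta]$ by the condition that $\norm{p}\cos\gamma$ lie in the middle third of $J$ for $\gamma\in[\alpha_{k+1},\beta_{k+1}]$. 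The point $\norm{p}=t_0$ itself need not belong to $J$ at all; only its projections $\norm{p}\cos\gamma$ do. With $T_{k+1}=\{t\in T':t>M\}$ for $M$ large enough to absorb the $O(1/t)$ error, one gets $\norm{t^\gamma-p}\in \norm{t}-J\subseteq A$ for all $t\in T_{k+1}$ and $\gamma\in[\alpha_{k+1},\beta_{k+1}]$, closing the induction. In short: drop the symmetry of the new angle window and let $J$ dictate where it sits inside the old one; no strengthening of the inductive hypothesis is needed.
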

\begin{proof}
Let $A_+ := \L(B)$, so $A:= A_+ \cup -A_+$. Since $A$ satisfies the hypotheses of Lemmma \ref{subinterval}, $A_+$ is unbounded. We prove the statement of the theorem by induction.
\begin{itemize}
\item For $k = 1$, let $p_1:=( 0, 0)$, let $\alpha_1:=0; \beta_1:= \pi/4$, and let $T_1:=A_+$. 
\begin{enumerate}
\item Condition $1$ is trivial.
\item $p_1 - p_1 =(0, 0) \not \in B$ by assumption, so $P_1 - P_1 \subseteq B^c$.
\item Lastly, $T_1^\gamma - P_1 = T_1^\gamma = A_+^\gamma \subseteq B$ since $B$ is radial. 
\end{enumerate}
\item Suppose the statement holds for some $k$. We prove it holds for $k + 1$. Without loss of generality (by rotating all the points about the origin), we have $\alpha_k = 0, \beta_k =:\beta$. 
Let $$p \in T_k^0, \norm{p} > \frac{C}{(1 - \cos \beta)} +  \sum_{i = 1}^k \norm{p_i} + \eps$$ (which exists since $T_k$ is unbounded).

\begin{enumerate}
\item Since  $\norm{p} > \norm{p_i} + \eps$, we have $\norm{p - p_i} > \eps$ for all $i \leq k$. 
\item Since $p \in T_k^0$ and $0 \in [\alpha_k, \beta_k]$, by the third condition of the induction assumptions, $p - P_k \subseteq B^c$, and since $B$ is radial, $P_k - p \subseteq B^c$ as well. Hence, $P_{k + 1}:= p \cup P_k$ satisfies $$P_{k + 1} - P_{k + 1}  \subseteq B^c.$$
\item Consider the interval $I:= [\norm{p} \cos (\beta), \norm{p}] \subset \R_+.$ By choice of $p$, $\mu({I}) > C$, so by Lemma \ref{subinterval}, there exists a positive measure interval $[a, b] = J \subset I$ and an unbounded subset $T \subset T_k$ such that
\begin{align}\label{shmeep}
T - J \subset A.
\end{align}

Now, let $q_\gamma$ be the projection of $p$ onto $\ell^\gamma$. We choose $\alpha_{k + 1}, \beta_{k + 1}$ to satisfy: 
$$\norm{p} \cos(\beta_{k +1}) = a + \mu(J)/3, \norm{p} \cos(\alpha_{k + 1}) = b -  \mu(J)/3 = a + 2 \mu(J)/3.$$ Note that $[\alpha_{k +1}, \beta_{k + 1}] \subseteq [0, \beta]$ and for all $\gamma \in [\alpha_{k  +1}, \beta_{k + 1}]$, 
\begin{align}\label{lamma}
\norm{q_\gamma} = \norm{p} \cos(\gamma) \subseteq [a + \mu(J)/3, b - \mu(J)/3].
\end{align} 

Let $\gamma \in [\alpha_{k +1}, \beta_{k + 1}].$ Note that for $x \in \ell^\gamma$, we have $d(x, p) - d(x, q_\gamma) \limto{\norm{x} \to \infty} 0$ uniformly for all $\gamma$. Hence, for sufficiently large $M > 0,$ the set $$T_{k + 1} := \{x \in T \ \vert  \norm{x} > M\}$$ satisfies:
\begin{align}\label{meep}
\abs{d(x, p) - d(x, q_\gamma)}  < \mu(J)/3 \text{ for all }x \in T_{k + 1}^\gamma.
\end{align}
We choose $M$ sufficiently large to also satisfy $M > \norm{p}$ (and hence $M > \norm{q_\gamma}$ for all $\gamma$). 

We now collect all of the above together. 
By the induction hypothesis, we already know that 
$$T^\gamma_{k + 1} - P_k \subseteq T_k^\gamma - P_k \subseteq B.$$ It remains to show that 
$$T_{k + 1}^\gamma - p \subseteq B.$$ Indeed, let $t \in T^\gamma_{k + 1}$. Since $t, q_\gamma$ and the origin lie on a line and since $\norm{t} \geq M > \norm{q_\gamma}$, we have 
$$\norm{t  - q_\gamma} = \norm{t} - \norm{q_\gamma}.$$
By (\ref{lamma}), 
$$ \norm{t} - \norm{q_\gamma} \in [\norm{t} - b + \mu(J)/3, \norm{t} - a - \mu(J)/3].$$
Lastly, by (\ref{meep}) it follows that
$$\norm{t - p} \in [\norm{t  - q_\gamma} - \mu(J)/3, \norm{t  - q_\gamma} +\mu(J)/3] \subseteq \norm{t} - J.$$ Since $\norm{t} \in T_{k + 1} \subseteq T$, by (\ref{shmeep}) we see that 
$$\norm{t - p} \subseteq A,$$ which by radiality of $B$ implies that $$t - p \in B.$$
 This completes the inductive step.

\end{enumerate}
\end{itemize}
\end{proof}

\begin{theorem}\label{strongestcor}
Let $n \geq 2$, let $f: \R^n \to \R$ be a radial Cohn-Elkies function, and let $Z$ be the set of zeros of $f$ (or $\hat{f}$). Then for any $C, c > 0$, there exists an interval $I$ of length $C$ such that any subinterval $J \subseteq I$ of length $c$ contains an element of $\L(Z)$.
\end{theorem}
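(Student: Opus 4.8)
The plan is to negate the conclusion and use it to build, for each $k$, a union of $k$ disjoint balls whose difference set avoids $Z$, contradicting Theorem \ref{main} once $k$ is large enough. Suppose no such interval $I$ exists; then there are constants $C, c > 0$ so that every interval of length $C$ contains a subinterval of length $c$ missing $\L(Z)$ entirely. Since $f$ is radial, its zero set $Z \subseteq \R^n$ is radial, so $Z$ is determined by $\L(Z)$; I want to run the geometric construction of Lemma \ref{geolemma}. To do this I would apply that lemma with $B$ taken to be a $2$-dimensional radial set whose length set is exactly $\L(Z)$ (more precisely, the set $\L(Z)^0 \cup \L(Z)^{\pi/2}$-type planar model, intersected appropriately); the hypotheses of Lemma \ref{subinterval} hold for $A := \L(Z) \cup -\L(Z)$ precisely because of the negated conclusion — every length-$C$ interval $J$ has $J \cap A^c$ containing a length-$c$ interval, so $J \setminus A \supseteq$ (interval of length $c$), which upon taking $B := (\text{radial completion of } \L(Z))$ and working with $B^c$ is what Lemma \ref{geolemma} consumes. (One has to be slightly careful about which of $A$ vs.\ $A^c$ plays which role, and match the sign conventions, but this is bookkeeping.)

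Concretely, the steps are: (i) fix $\eps>0$ with $2\eps < \inf \L(Z)$ (possible since $f$ is continuous, so $0 \notin \overline{Z}$); (ii) apply Lemma \ref{geolemma} with this $\eps$ and with $k$ to be chosen, obtaining points $p_1, \dots, p_k \in \R^2$, pairwise more than $\eps$ apart, with $P_k - P_k \subseteq B^c$ and with a whole angular interval $[\alpha_k,\beta_k]$ and unbounded $T_k$ satisfying $T_k^\gamma - P_k \subseteq B^c$; (iii) embed $\R^2 \hookrightarrow \R^n$ isometrically (using $n \geq 2$), so the $p_i$ become points in $\R^n$ whose pairwise differences, having planar length avoiding $\L(Z)$, avoid $Z$; (iv) replace each $p_i$ by the ball $B^n_\eps(p_i)$ — since the $p_i$ are more than $\eps$ apart these balls are disjoint, and since differences of points in $\bigcup B^n_\eps(p_i)$ lie within distance $2\eps$ of $P_k - P_k$, and $P_k-P_k$ lies in the radial set $B^c$... wait — I need the $2\eps$-neighborhood of $P_k - P_k$ to still avoid $Z$, which is why the construction must be run with balls of radius $\eps$ baked in. The cleaner route: redo the induction of Lemma \ref{geolemma}'s proof so that at each stage the *ball* $B^n_\eps(p_i)$, not just the point, has its difference set landing in a length-interval of $A^c$; the lengths $\mu(J)$ produced there are positive but not controlled, so one instead chooses $\eps$ small relative to the $c$ of the negated hypothesis and checks $[\,\ell - 2\eps, \ell+2\eps\,]$ stays inside a length-$c$ gap. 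Then (v) pick $k$ with $k \cdot V_n \eps^n = \mu\big(\bigcup_i B^n_\eps(p_i)\big) > 1$ and apply Theorem \ref{main}(1) (or (2) for $\hat f$) to get a zero of $f$ in the difference set — contradiction.

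The main obstacle is step (iv): Lemma \ref{geolemma} as stated produces isolated points $p_i$ and an unbounded *set of directions-times-radii* $T_k^\gamma$, all with differences avoiding $B$ only as a set of points, whereas Theorem \ref{main} needs a genuine positive-measure set $S$, i.e.\ we need *thickened* points and we need the thickening not to spoil the avoidance. Getting a uniform $\eps$-thickening to go through requires that the "gaps" in $\L(Z)$ guaranteed by the negated hypothesis have length bounded below (they do: length $c$), and that the geometric errors in Lemma \ref{geolemma} (the quantities $\mu(J)/3$ appearing in equations (\ref{meep})–(\ref{shmeep})) can be taken smaller than, say, $c/4$ — which they can, by choosing the interval $J$ inside a fixed length-$c$ gap rather than an arbitrary subinterval, and then shrinking. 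So the honest proof modifies the statement/proof of Lemma \ref{geolemma} to carry an extra parameter (a fixed lower bound on the gap length) rather than quoting it verbatim; with that in hand the contradiction with Theorem \ref{main} is immediate. I expect the write-up to be: restate the negated hypothesis as "$A := \L(Z)$ is such that every length-$C$ interval contains a length-$c$ gap," feed this into (a mild strengthening of) Lemmas \ref{subinterval} and \ref{geolemma} to get $k$ disjoint $\eps$-balls in $\R^n$ with difference set in $Z^c$, and conclude by Theorem \ref{main}.
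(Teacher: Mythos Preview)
Your proposal is correct and follows essentially the same route as the paper: negate the conclusion, feed the resulting ``every length-$C$ interval has a length-$c$ gap in $\L(Z)$'' statement into Lemma~\ref{geolemma}, embed the resulting planar configuration into $\R^n$ using $n\ge 2$, thicken the points to disjoint balls, and contradict Theorem~\ref{main}. The one place where the paper is cleaner is exactly the obstacle you flag in step~(iv). Rather than modifying Lemma~\ref{geolemma} to carry a uniform gap parameter through the induction, the paper simply applies the lemma as stated but to the \emph{$\eps$-thickened} zero set: with $\tilde c$ the original gap length and $\eps<\tilde c/2$, every length-$C$ interval still contains a length-$(\tilde c-2\eps)$ subinterval missing $N_\eps(\L(Z))$, so one takes $B\subseteq\R^2$ radial with $\L(B)=\bigl[\L(N_\eps(Z))\bigr]^c$. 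Lemma~\ref{geolemma} then produces points $p_i$ with pairwise differences already avoiding $N_\eps(Z)$, and the $\eps/2$-balls around their images in $\R^n$ automatically have difference set inside $Z^c$. This pre-shrinking trick is exactly the ``choose $\eps$ small relative to $c$ and check $[\ell-2\eps,\ell+2\eps]$ stays in the gap'' idea you gesture at, but it lets you quote the lemma verbatim instead of reopening its proof. Also note the correct choice of $B$ has $\L(B)$ equal to the \emph{complement} of (the thickening of) $\L(Z)$, not $\L(Z)$ itself, so that the hypothesis of Lemma~\ref{subinterval} matches the negated conclusion; you flag this as bookkeeping, and it is, but it is worth getting right on the page.
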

\begin{proof}
We again prove this by contradiction. Suppose this is not the case. Then there exist $C, \tilde c > 0$ such that any interval $I$ with $\mu(I) = C$ contains a subinterval of length $\tilde c$ that avoids $\L(Z)$. Let $\eps$ be such that $$0 < \eps < \min\{\tilde c/2,\inf \L(Z)/2, C/4\},$$
 and let $$c:= \tilde c - 2\eps>0.$$ 
Then any interval of length $C$ contains a subinterval of length $c$ that avoids $\L(N_\eps(Z)) = N_\eps(\L(Z))$. Let $B \subseteq \R^2$ be a radial set with $\L(B) = [\L(N_\eps(Z))]^c$. By choice of $\eps$, $(0, 0) \not \in B$. We claim that the set $B$ satisfies the hypotheses of Lemma \ref{geolemma}. Indeed, consider any interval $I \subseteq \R$ of length $C$. If $I$ is contained in $\R_{\geq 0}$ or $\R_{\leq 0}$, this follows directly from the hypotheses. Otherwise $(0, 0) \in I$, so since $c < \inf  \L(Z)/2$ and $c < C/2$, we see that $I$ contains one of the two intervals $[0, \inf \L(Z)]$ or $[-\L(Z), 0]$, and hence again contains a subinterval of length $c$ that avoids $\L(B) \cup -\L(B).$ 

Applying Lemma \ref{geolemma}, we get that for any $k$, we can find $k$ distinct points $p_1, \ldots, p_k$ in $\R^2$ such that $\norm{p_i - p_j} > \eps$ for $i \neq j$ and $\norm{p_i - p_j} \in \L(N_\eps(Z))^c$ for all $i, j$. Choose any distance-preserving embedding of these points into $\R^n$, and let $q_i$ be the image of $p_i$ under this embedding. Note that the set $Q:= \bigcup q_i$ satisfies $$Q - Q \in (N_\eps(Z))^c,$$ and hence 
$$N_{\eps/2}(Q) - N_{\eps/2}(Z) =  N_\eps(Q - Q) \subseteq Z^c.$$ Let $S:= N_{\eps/2}(Q)$. Note that $Q$ is a disjoint union of $k$ balls. For sufficiently large $k$, $\mu(S) >1$. This contradicts Theorem \ref{main}. 
\end{proof}

\begin{corollary}
Let $n \geq 2$, let $f: \R^n \to \R$ be a radial Cohn-Elkies function, and let $Z$ be the set of zeros of $f$ (or $\hat{f}$). Then for any $\eps >0$ and $K > 0$, there exists $x > K$ such that $[x,  x + \eps] \cap \L(z) $ contains at least $2$ points. 
\end{corollary}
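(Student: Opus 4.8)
The statement is an immediate consequence of Theorem~\ref{strongestcor} together with a counting argument. The plan is to apply Theorem~\ref{strongestcor} twice with cleverly chosen parameters, so that a single interval is forced to contain enough elements of $\L(Z)$ in a short window that two of them must be within $\eps$ of each other.

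First I would fix $\eps > 0$ and $K > 0$. Apply Theorem~\ref{strongestcor} with $c := \eps/2$ and with $C$ chosen large — specifically $C := 2(K + \lceil 4/\eps\rceil \cdot \eps)$ or any value with $C > 2K$ — to obtain an interval $I = [a, a+C]$ of length $C$ such that every subinterval of $I$ of length $\eps/2$ contains an element of $\L(Z)$. Since $C > 2K$, at least one of the two half-intervals of $I$ lies entirely in $(K, \infty)$; replacing $I$ by that half (or simply noting $[a,a+C]$ with $a \geq K$, which one can arrange by taking $C$ large enough and using that the conclusion of Theorem~\ref{strongestcor} can be applied to force $a$ as large as desired — more carefully, Theorem~\ref{strongestcor} as stated produces \emph{some} interval of length $C$, so I would instead take $C$ large enough that \emph{any} interval of length $C$ meeting $(K,\infty)$ has a sub-window in $(K,\infty)$, then restrict), we may assume $I \subseteq (K, \infty)$ after shrinking.

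Now partition $I$, or rather a sub-window $[x_0, x_0 + \eps] \subseteq I$ of length $\eps$ lying in $(K,\infty)$, into two consecutive subintervals $[x_0, x_0 + \eps/2]$ and $[x_0 + \eps/2, x_0 + \eps]$, each of length $\eps/2$. By the defining property of $I$, each of these two subintervals contains an element of $\L(Z)$; call them $z_1 \in [x_0, x_0+\eps/2]$ and $z_2 \in [x_0+\eps/2, x_0+\eps]$. Then $z_1, z_2 \in [x_0, x_0+\eps] \cap \L(Z)$, and if $z_1 = z_2$ we may instead split more finely to force genuinely distinct points: take the partition into, say, three subintervals of length $\eps/3 < \eps/2 = c$ only if $\eps/3 \geq c$ — to avoid this issue cleanly I would instead from the start apply Theorem~\ref{strongestcor} with $c := \eps/3$, partition a length-$\eps$ window into three subintervals of length $\eps/3$, pick one element of $\L(Z)$ from the first and one from the third; these lie in disjoint closed subintervals, hence are distinct, and both lie in $[x_0, x_0 + \eps]$. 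Setting $x := x_0$ gives $x > K$ with $[x, x+\eps] \cap \L(Z)$ containing at least two points.

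\textbf{Main obstacle.} The only subtlety is arranging that the interval $I$ produced by Theorem~\ref{strongestcor} can be taken inside $(K, \infty)$, since that theorem merely asserts existence of \emph{some} length-$C$ interval with the subinterval-covering property. The clean fix is to observe that the contradiction hypothesis inside the proof of Theorem~\ref{strongestcor} — namely that \emph{every} length-$C$ interval contains a $\tilde c$-gap in $\L(Z)$ — is exactly negated by the conclusion we want, so one can rerun the argument: if for every $x > K$ the set $[x, x+\eps] \cap \L(Z)$ had at most one point, then every length-$\eps$ interval in $(K,\infty)$ would contain a $(\eps - \delta)$-gap for small $\delta$ (two points at most one means a gap covering most of the interval), and since $\L(Z)$ is unbounded this would let one build the large configuration of balls as in Theorem~\ref{strongestcor}'s proof, contradicting Theorem~\ref{main}. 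So rather than quoting Theorem~\ref{strongestcor} as a black box, I would quote its proof technique directly, which sidesteps the "which interval" issue entirely. This is the step requiring the most care; everything else is elementary interval combinatorics.
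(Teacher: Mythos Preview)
Your approach is correct and is what the paper intends: the corollary is stated without proof, as an immediate consequence of Theorem~\ref{strongestcor}. The clean version of your argument---apply Theorem~\ref{strongestcor} with $c = \eps/3$ and $C$ large, locate a length-$\eps$ window of $I$ inside $(K,\infty)$, then pick elements of $\L(Z)$ from the first and third thirds of that window---works.

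Your ``main obstacle'' dissolves more simply than you realize, and your own reasoning there is shaky. The claim ``since $C > 2K$, at least one of the two half-intervals of $I$ lies entirely in $(K,\infty)$'' is false as stated: nothing in Theorem~\ref{strongestcor} prevents $I$ from sitting around the origin. The missing observation is that $\L(Z) \subset (0,\infty)$ (since $f(0) > 0$), so the leftmost length-$c$ subinterval of $I$ must contain a positive number, forcing the left endpoint of $I$ to exceed $-c$. Then the right endpoint exceeds $C - c$, and taking $C > K + \eps + c$ guarantees that $I \cap (K,\infty)$ contains an interval of length $\eps$. That is the one-line fix; no need to reopen the proof of Theorem~\ref{strongestcor}.

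Separately, in your fallback approach the assertion that ``at most one point of $\L(Z)$ in $[x,x+\eps]$'' yields an $(\eps-\delta)$-gap is wrong: a single point can sit at the midpoint, so you only get a gap of length $\eps/2$. This would still suffice to rerun the construction, but since the direct argument above already works, the fallback is unnecessary.
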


\begin{rem}
Calculations performed by Henry Cohn suggest that for $n \geq 2$, the $\limsup$ of the distances between lengths of zeros of a Cohn-Elkies function $f:\R^n \to \R$ is $0$ as well. We note that this cannot be shown using Theorem \ref{main}. For example, for a radial set $Z$ with 
$$\L(Z^c) = [0, \eps] \cup \{[2^k - \eps, 2^k + \eps]\}_{k \in \N},$$ for a sufficiently small $\eps$, there is no contradiction with Theorem \ref{main} with regards to $Z$ being a zero set of $f$. 
\end{rem}
\section{Simultaneous roots of $f$ and $\hat{f}$}
Among known Cohn-Elkies functions, $f$ and $\hat{f}$ have a lot of common roots. For example, in $8$ dimensions, the known radial optimal function $f$ and its Fourier transform $\hat{f}$ have the exact same set of root lengths $\{\sqrt{2n}\}_{n\geq 1}$, and in $24$ dimensions both $f$ and $\hat{f}$ have root lengths $\{\sqrt{2n}\}_{n\geq 2}$. It would be of interest to prove a general result about common roots of $f$ and $\hat{f}$. 
\begin{theorem}
Let $f: \R^n \to \R$ be a radial Cohn-Elkies function and let $r:= r(f)$ and assume that $\hat{f}(x)$ does not vanish for $\norm{x} \in [0; r)$. Then: $\hat{f}(x) = 0$ for $\norm{x} = r$. 
\end{theorem}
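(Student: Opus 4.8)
The plan is to argue by contradiction, via the function‑improving technique from the proof of Theorem~\ref{main}. Suppose $\hat f$ vanishes at no point of length $r$. Since $\hat f \ge 0$ and, by hypothesis, $\hat f > 0$ on $\{\norm{x} < r\}$, this forces $\hat f > 0$ on the closed ball $B^n_r(0)$, hence, by continuity and compactness, on $B^n_{r+\delta}(0)$ for some $\delta > 0$. I would also record a quantitative version of the fact that $f$ only barely rises above $0$ just inside the critical radius: writing $M_\eta := \sup\{ f(x) : r - \eta \le \norm{x} \le r\}$, one has $M_\eta > 0$ for every $\eta > 0$ (otherwise $f \le 0$ on $\{\norm{x} \ge r-\eta\}$ and $r(f) \le r - \eta$), while $M_\eta$ decreases to $\max_{\norm{x} = r} f \le 0$ as $\eta \to 0^+$; hence $M_\eta \to 0^+$.

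Next, imitating the proof of the second part of Theorem~\ref{main}, I would pick a ball $S = B^n_\rho(0)$ with $\rho$ small enough that $\widehat{\ind_S} > 0$ on $B^n_r(0)$ (possible since the zeros of $\widehat{\ind_{B^n_\rho(0)}}$ recede to infinity as $\rho \to 0$) and $S - S = B^n_{2\rho}(0) \subseteq B^n_{r+\delta}(0)$, set $h := \ind_S * \ind_{-S}$ (so $\hat h = |\widehat{\ind_S}|^2$), and, for small $\alpha > 0$, put $F := f - \alpha \hat h$. Exactly as in that proof, $F$ is acceptable, $F \le f$ (so $F \le 0$ for $\norm{x} \ge r$), and $\widehat F = \hat f - \alpha h \ge 0$ once $\alpha \le \min_{S - S}\hat f \,/\, \mu(S)$, because $h$ is supported in $S - S$ where $\hat f > 0$ and $0 \le h \le h(0) = \mu(S)$. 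The new point is that, putting $m := \min_{B^n_r(0)} |\widehat{\ind_S}| > 0$, on the shell $\{ r - \eta \le \norm{x} \le r\}$ we have $F \le M_\eta - \alpha m^2$; choosing $\eta$ with $M_\eta \le \alpha m^2$ (possible for small $\eta$, since $M_\eta \to 0$) makes $F \le 0$ on $\{\norm{x} \ge r - \eta\}$, i.e.\ $r(F) \le r - \eta < r$.

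Finally I would rescale into $\F_n$. Imposing also $\alpha < f(0)/\mu(S)^2$ ensures $F(0) > 0$; then, with $c := (\widehat F(0)/F(0))^{1/n}$ and $F_c(x) := F(cx)$, one gets $F_c \in \F_n$ with $r(F_c) = r(F)/c$, and a contradiction with $r = R(n)$ provided $r(F_c) < r$. This inequality is the crux — and the main obstacle. Since $\widehat F(0) - F(0) = \alpha\bigl(\mu(S)^2 - \mu(S)\bigr)$ and any ball $S$ satisfying the constraints above has $\mu(S) < 1$, we get $c < 1$: rescaling inflates $r(F)$, so $r(F_c) < r$ only if the gain $\eta$ beats the loss $r(1-c)$. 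As $\alpha \to 0$ the loss $1 - c$ is of order $\alpha\,\mu(S)(1 - \mu(S)) / \bigl(n f(0)\bigr)$ and $\eta$ may be taken as large as $M_\eta \le \alpha m^2$ allows, so the requirement becomes
$$\liminf_{\eta \to 0^+} \frac{M_\eta}{\eta} \;<\; \frac{n\, f(0)\, m^2}{r\, \mu(S)\,\bigl(1 - \mu(S)\bigr)}\,.$$
The right‑hand side blows up as $\mu(S) \to 1$, i.e.\ as $\rho \to V_n^{-1/n}$, which is allowed precisely when $\hat f$ stays positive on a ball of radius approaching $2V_n^{-1/n}$; this is automatic when $R(n) = 2V_n^{-1/n}$ (e.g.\ $n = 1$), where one may anyway apply the second part of Theorem~\ref{main} directly to $S = B^n_{(r+\delta)/2}(0)$, since then $\mu(S) > 1$ while $S - S \subseteq B^n_{r+\delta}(0)$ is forced to meet the zero set of $\hat f$. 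Closing the general case — where $\hat f$ may be positive only on a ball barely exceeding $B^n_r(0)$, keeping $\mu(S)$ bounded away from $1$, and where $\liminf_{\eta\to 0^+} M_\eta/\eta$ must be shown small enough — is the real difficulty, and would call for a sharper choice of the improving set $S$ or for a separate regularity estimate on $f$ near radius $r$.
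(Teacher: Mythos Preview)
Your approach has a genuine gap, which you yourself identify: the rescaling step works against you. Because you must take $\mu(S) < 1$ (the support constraint $S - S \subseteq B^n_{r+\delta}(0)$ forces this whenever $R(n) < 2V_n^{-1/n}$, and you also need $\rho$ small enough for $\widehat{\ind_S}$ to stay positive on $B^n_r(0)$), you get $c < 1$ and $r(F_c) = r(F)/c > r(F)$. Winning then requires the quantitative bound $\liminf_{\eta\to 0^+} M_\eta/\eta$ to be small, and nothing in the hypotheses controls the rate at which $M_\eta \to 0$; continuity of $f$ alone gives no such estimate. So the argument does not close in general.

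The paper bypasses the entire difficulty with a much simpler choice of perturbation: instead of subtracting a multiple of $\hat h$, subtract a multiple of $\hat f$ itself. Set $F := f - c\hat f$ for small $0 < c < 1$. Then
\begin{itemize}
\item $F(0) = f(0) - c\hat f(0) = \hat f(0) - c f(0) = \widehat F(0)$ automatically, because $f(0) = \hat f(0)$; no rescaling is ever needed, so the obstacle you hit simply does not arise;
\item $F = f - c\hat f \le f$ (since $\hat f \ge 0$), so $F \le 0$ for $\norm{x} \ge r$;
\item $\widehat F = \hat f - c f \ge \hat f \ge 0$ for $\norm{x} \ge r$ (since $f \le 0$ there), and for $\norm{x} \le r$ one has $\hat f \ge C > 0$ by compactness and the contradiction hypothesis, so $\widehat F \ge 0$ once $c$ is small enough;
\item at $\norm{x} = r$ one has $F \le -c\hat f < 0$ strictly, so by continuity $r(F) < r$.
\end{itemize}
This gives $F \in \F_n$ with $r(F) < r$, the desired contradiction. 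The point you were missing is that $\hat f$ is already a ready-made nonnegative ``bump'' whose value at $0$ matches $f(0)$ exactly, so it is a far better perturbation than any $\hat h$ built from an indicator.
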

The assumption that $\hat{f}$ does not vanish on $[0, r)$ is consistent with the known Cohn-Elkies functions; moreover, $f$ also appears not to vanish in this region. With these assumptions, the theorem asserts that $f$ and $\hat{f}$ share their smallest root. 
\begin{proof}
Suppose for contradiction that $\hat{f}$ does not vanish in the region $\norm{x} \leq r$ (so $\hat{f}$ is strictly positive in this region). Consider the function $F: = f - c \hat{f}$, where $0 < c < 1$. Then: 
\begin{itemize}
\item $F$ is acceptable;
\item $F(0) = f(0) - c \hat{f}(0) = \hat{f}(0) - c f(0) = \widehat{F}(0) > 0$; 
\item $\widehat{F}(x) = \hat{f}(x) - c f(x) \geq 0$ for $\norm{x} \geq r$; 
\item $F(x) \leq 0$ for $\norm{x} \geq r$, and $F(x) = - c \hat{f}(x) < 0$ for $\norm{x} = r$, which means that there is some $r' < r$ such that $F(x) \leq 0$ for $\norm{x} > r'$. 
\end{itemize}
Since $\hat{f}(x) > 0$ for $\norm{x} \leq r$, $\hat{f}$ is bounded below by some constant $C> 0$ in that region, which means we can choose $c$ sufficiently small to have $\widehat{F} \geq 0$ everywhere. We have found $F \in \F_n$ with $r(F) = r' < r$, leading to a contradiction. 
\end{proof}
\begin{corollary}
Let $f: \R^n \to \R$ be a radial Cohn-Elkies function. Then $\hat{f}$ has a root in the region $\norm{x} \leq r(f)$.

\end{corollary}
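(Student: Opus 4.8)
The plan is to obtain this as an immediate consequence of the preceding theorem by a simple dichotomy. Write $r := r(f)$. Since $f$ is radial, so is $\hat{f}$, and both are continuous because $f$ is acceptable; in particular it makes sense to ask whether $\hat{f}$ has a root in the region $\norm{x} \leq r$.

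First I would dispose of the case in which $\hat{f}$ vanishes at some point $x$ with $\norm{x} \in [0, r)$. Then $x$ is itself a root of $\hat{f}$ lying in the region $\norm{x} \leq r$, and there is nothing further to prove. (The value $\norm{x} = 0$ is of course excluded since $\hat{f}(0) = f(0) > 0$, but this observation is not needed for the argument.)

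In the complementary case, $\hat{f}$ does not vanish anywhere on $\{x : \norm{x} \in [0, r)\}$, which is exactly the hypothesis of the theorem proved just above. Applying that theorem, we conclude that $\hat{f}(x) = 0$ for every $x$ with $\norm{x} = r$; any such $x$ again lies in the region $\norm{x} \leq r$. Combining the two cases, $\hat{f}$ has a root with $\norm{x} \leq r = r(f)$, as claimed.

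I do not expect any genuine obstacle here: the entire content is carried by the preceding theorem, and the only point requiring a moment's care is to phrase the dichotomy so that the theorem's hypothesis ("$\hat{f}$ does not vanish for $\norm{x} \in [0, r)$") is met precisely in the case that is not already settled trivially.
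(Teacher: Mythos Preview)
Your argument is correct and is exactly the intended one: the paper states the corollary without proof because it follows immediately from the preceding theorem via precisely the dichotomy you describe. There is nothing to add or change.
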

\section{Acknowledgments}
 I would like to thank Henry Cohn for being a superb mentor for this project, as well as Microsoft Reseach New England for providing a great working environment over the summer of $2018$. 

\end{document}